\newtheorem{theorem}{Theorem}
\newtheorem{lemma}[theorem]{Lemma}
\newtheorem{proposition}[theorem]{Proposition}
\newenvironment{proof}[1][Proof]{\textbf{#1.} }{\ \rule{0.5em}{0.5em}}
\renewcommand{\geq}{\geqslant}
\def\leq{\leqslant}
\def\1{{\mathbf{1}}}
\def\1{{\mathbf{1}}}
\def\0.5{{\frac{1}{2}}}
\renewcommand{\thefootnote}{\fnsymbol{footnote}}
\begin{document}

\begin{center}
{\Large \textbf{Volatility estimation in fractional Ornstein-Uhlenbeck models }} \\[0pt]
~\\[0pt]
Salwa Bajja\footnote{%
National School of Applied Sciences - Marrakesh, Cadi Ayyad University,
Marrakesh, Morocco. Email: \texttt{salwa.bajja@gmail.com }}, Khalifa
Es-Sebaiy\footnote{%
Department of Mathematics, Kuwait University, Kuwait. Email: \texttt{%
khalifasbai@gmail.com}} and Lauri Viitasaari \footnote{%
Department of Mathematics and Statistics, University of Helsinki, Helsinki, P.O. Box
68, FIN-00014 University of Helsinki, Finland. E-mail:\texttt{%
lauri.viitasaari@iki.fi}}\\[0pt]
\textit{Cadi Ayyad University, Kuwait University and University of Helsinki}\\[0pt]
~\\[0pt]
\end{center} 

\begin{abstract}
\medskip In this article we study the asymptotic behaviour of the
realized quadratic variation of a process $\int_{0}^{t}u_{s}dY_{s}^{(1)}$%
, where $u$ is a $\beta$-H\"older continuous process with $\beta > 1-H$
 and $Y_{t}^{(1)}=\int_{0}^{t}e^{-s}dB^{H}_{a_s}$, where $a_{t}=He^{\frac{t%
}{H}} $ and $B^H$ is a fractional Brownian motion, is connected to the fractional Ornstein-Uhlenbeck process of the second kind. We prove almost sure convergence uniformly in time, and a stable weak convergence for the realized quadratic variation. As an application, we construct strongly consistent estimator for the integrated volatility parameter in a model driven by $Y^{(1)}$. 
\end{abstract}

\renewcommand{\thefootnote}{\arabic{footnote}} \noindent {\textbf{2010 Mathematics Subject Classification:}}
60G15, 60G22, 62M09, 62F12.
\renewcommand{\thefootnote}{\arabic{footnote}}

\renewcommand{\thefootnote}{\arabic{footnote}} \noindent {\textbf{Keywords:}}
Fractional Brownian motion; Quadratic variation; Stable convergence; Volatility.

\section{Introduction}
The realized quadratic variation is a powerful tool in the statistical analysis of stochastic processes, and it has received a lot of attention in the literature. Furthermore, its generalization, the realized power variation of order $p>0$, have received similar attention as it can tackle with several problems related to realized quadratic variation. For example, the asymptotic normality does not hold for realized quadratic variation in the case of the fractional Brownian motion $B^H$ with $H>\frac34$, while asymptotic normality hold for realized power variation if one chooses $p$ large enough. 

The realized power variation of order $p$ (quadratic variation if $p=2$) is defined as
\begin{equation}  \label{eq:statistic}
\sum_{i=1}^{[nt]}\left|X_{i/n}-X_{(i-1)/n}\right|^p
\end{equation}
where $\{X_{t},t\geq0\}$ is a stochastic process. It was originally introduced in
Barndorff-Nielsen and Shephard (\cite{BS2002}, \cite{BS2003},\cite{BS2004a},%
\cite{BS2004b}) to estimate the integrated volatility in some stochastic
volatility models used in quantitative finance and also, under an
appropriate modification, to estimate the jumps of the processes. The main interest in the mentioned papers is the asymptotic behaviour of
appropriately normalised version of the statistic (\ref{eq:statistic}), when the process $X_{t}$ is a stochastic
integral with respect to a Brownian motion. Refinements of the results
have been obtained in \cite{W2003} and \cite{W2005}, and further extensions
can be found in \cite{BSal}. 

The asymptotic behaviour of the power variation of a stochastic integral $Z_{t}=\int_{0}^{t}u_{s}dB_{s}^{H}$ with respect to a fractional Brownian motion was studied in \cite{CNW}. In \cite{CNW} the authors proved that if $u=\{u_{t},t\geq0\}$ has finite $q$-variation for some $q<1/(1-H)$, then
\begin{eqnarray}
n^{-1+pH}V_{p}^{n}(Z)_{t} &\longrightarrow& c_{1,p}\int_{0}^{t}|u_{s}|^{p}ds
\end{eqnarray}
uniformly in probability in any compact sets of t, where $c_{1,p}=\mathrm{I%
\kern-0.16em E}[|B_{1}^{H}|^{p}].$ The authors also proved central limit theorem for $H \in (0,\frac{3}{4}]$. However, the
condition $H\in(0,\frac{3}{4}]$ is critical in \cite{CNW}. The first
objective of \cite{HNZ} was to remove this
restriction. They used higher order differences and defined the power variation as $V_{k,p}^{n}(Z)_{t}=\sum_{i=1}^{[nt]-k+1}\left|%
\sum_{j=0}^{k}(-1)^{k-j}C_{j}^{k}Z_{(i+j-1)/n}\right|^{p}$ for certain numbers $C_j^k$.

In this paper we study the asymptotic behaviour of the realized quadratic variation of a process of the form $
\int_{0}^{t}u_{s}dY^{(1)}_{s}$, where $Y_{t}^{(1)}=%
\int_{0}^{t}e^{-s}dB_{a_{s}}$; $a_{t}=He^{t/H}$, $B^{H}$ is a fractional
Brownian motion with Hurst parameter $H\in(0,1)$, and $u$ is a $\beta$-H\"older continuous process with $\beta > 1-H$. We note that the process $Y^{(1)}$ is connected to the fractional Ornstein-Uhlenbeck process of the second kind, that is defined through the Lamperti transform of the fractional Brownian motion. Equivalently, fractional Ornstein-Uhlenbeck process of the second kind can be defined as the solution to the stochastic differential equation 
\begin{eqnarray} 
dX_{t} &=& -\theta X_{t}dt+\sigma dY^{(1)}_{t}.
\end{eqnarray}
This process shares path properties with the fractional Brownian motion, but unlike the fractional Brownian motion with $H>\frac12$, it has a short memory. This allows one to model short memory with a transformation of the fractional Brownian motion, and thus makes the process $Y^{(1)}$ interesting. 

As our main result, we obtain almost sure and uniform convergence. In comparison, \cite{CNW} obtained uniform convergence in probability. Actually, with our modified proofs one is able to prove stronger almost sure convergence also in their case. That is, in the case of the fractional Ornstein-Uhlenbeck process of the first kind. We also establish weak convergence result provided that $H\in\left(0,\frac34\right)$.

The rest of the paper is organised as follows. In Section \ref{sec:qv} we present and proof our main results. We begin Section \ref{sec:qv} by recalling some preliminary results. In Section \ref{sec:vol}, we apply our main results to the estimation of the integrated volatility. We end the paper with a short discussions. 
\section{Main results}
\label{sec:qv}
In this section we state and prove our main results that will be applied to the estimation of integrated volatility in Section \ref{sec:vol}. We begin by recalling some preliminaries.

Suppose that $B^H=\{B^H_{t},t\geq 0\}$ is an fBm with Hurst parameter $
H\in(0,1)$. That is, $
B^H$ is a zero-mean Gaussian process with covariance function
\begin{eqnarray}  \label{covariance function}
E\left[B_{s}^{H}B_{t}^{H}\right] &=&\frac{1}{2}\left[s^{2H}+t^{2H}-|t-s|^{2H}%
\right], \ \ s,t\geq0.
\end{eqnarray}
It is well-known that, for any $\epsilon>0$, the trajectories of $B^H$ are $(H-\varepsilon)$-Hölder continuous on any
finite interval. Indeed, this is a simple consequence of the Kolmogorov continuity criterion. 

We also recall that, for $p>0$, the $p$-variation of a real-valued function $f$ on an interval
[a,b] is defined as
\begin{equation}  \label{p-variation}
var_{p}(f;[a,b])=\sup_{\pi}\left(\sum_{i=1}^{n}|f(t_{i})-f(t_{i-1})|^p%
\right)^{1/p},
\end{equation}
where the supremum is taken over all partitions $\pi=\{a=t_{0}<t_{1}<...<t_{n}=b%
\}.$ We say that $f$ has finite $p$-variation (over the interval $[a,b]$), if $var_p(f;[a,b]) < \infty$. Young proved that the integral $\int_{a}^{b}fdg$
exists as a Riemann-Stieltjes integral provided that $f$ and $g$ have finite $p$-variation and $q$-variation with $1/p+1/q>1$. Moreover, the following inequality
holds:
\begin{equation}  \label{Young inequality}
\left|\int_{a}^{b}fdg-f(a)(g(b)-g(a))\right|\leqslant
c_{p,q}var_{p}(f;[a,b])var_{q}(g;[a,b]),
\end{equation}
where $c_{p,q}=\zeta(1/q+1/p)$ , with $\zeta(s)=\sum_{n\geq1}n^{-s}.$

We denote by
\begin{equation*}
\parallel f\parallel_{\alpha}:=\sup_{a\leqslant s<t\leqslant b}\frac{%
|f(t)-f(s)|}{|t-s|^\alpha}
\end{equation*}
the H\"older seminorm of order $\alpha$. Clearly, if $f$ is $\alpha$-H\"older continuous, then it has finite $
(1/\alpha)$-variation on any finite interval. In this case we have, for any $p\geq \frac{1}{\alpha}$, that
\begin{equation}
\label{eq:holder_var}
var_{p}(f;[a,b]) \leq \parallel f\parallel_{\alpha}(b-a)^\alpha.
\end{equation}
Throughout the paper, we also assume that $T< \infty$ is fixed. That is, we consider stochastic processes on some compact interval. We denote by $\|.\|_{\infty}$ the supremum norm on $[0,T]$.

For any natural number $n\geq1$, and for any stochastic process $%
Z=\{Z_{t},t\geq0\}$, we write
\begin{eqnarray}  \label{quadratic variation of integral}
V_{n}(Z)_{t}=\sum_{i=1}^{[nt]}\left|Z_{\frac{i}{n}}-Z_{\frac{i-1}{n}}\right|^2. 
\end{eqnarray}
We will use the following two general results, taken from \cite{Lauri Viitasaari}, on the convergence of the quadratic variations of a Gaussian process.
\begin{theorem}
(\cite[Theorem 3.1]{Lauri Viitasaari}) \label{theorem:BE_bound_QV} Let $X$
be a continuous Gaussian process and denote by $V_n^X$ its quadratic
variation defined by
\begin{equation*}
V_n^X = \sum_{k=1}^n \left[\left(\Delta_k X\right)^2 - \mathrm{I\kern-0.16em
E}\left(\Delta_k X\right)^2 \right],
\end{equation*}
where $\Delta_k X = X_{\frac{k}{n}} - X_{\frac{k-1}{n}}$. Assume that
\begin{eqnarray}  \label{eq: cvge prob}
\max_{1\leqslant j \leqslant N(\pi_{n})-1}\sum_{k=1}^{N(\pi_n)-1}\frac{1}{%
\sqrt{\phi(\Delta t_{k})\phi(\Delta t_{j})}}|\mathrm{I\kern-0.16em E}%
[(X_{t_{k}}-X_{t_{k-1}})(X_{t_{j}}-X_{t_{j-1}})]| &\leqslant& H(|\pi_{n}|)
\notag
\end{eqnarray}
for some function $\phi$ and $H(|\pi_{n}|)$.\newline
If $H(|\pi_{n}|)\rightarrow 0 $ as $|\pi_{n}|$ tends to zero, then the
convergence
\begin{eqnarray}
\left|\sum_{k=1}^{N(\pi_{n})-1}\frac{(X_{t_{k}}-X_{t_{k-1}})^2}{%
\phi(t_{k}-t_{k-1})}-\sum_{k=1}^{N(\pi_{n})-1}\frac{\mathrm{I\kern-0.16em E}%
(X_{t_{k}}-X_{t_{k-1}})^2}{\phi(t_{k}-t_{k-1})}\right| &\rightarrow& 0
\end{eqnarray}
holds in probability. Furthermore, the convergence holds almost surely provided that $H(|\pi_{n}|)=\circ(\frac{1}{\log(n)})$.
\end{theorem}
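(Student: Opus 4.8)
The plan is to show that the centered, normalised sum
\[
S_n:=\sum_{k=1}^{N(\pi_n)-1}\frac{(X_{t_k}-X_{t_{k-1}})^2-\mathbb{E}(X_{t_k}-X_{t_{k-1}})^2}{\phi(t_k-t_{k-1})}
\]
tends to $0$ in probability for the first claim and almost surely for the second, since this is exactly the difference appearing in the statement. The starting point is the Gaussian structure. Writing $G_k:=(X_{t_k}-X_{t_{k-1}})/\sqrt{\phi(t_k-t_{k-1})}$, the vector $(G_k)_k$ is centred Gaussian with covariance matrix $M_n$, whose entries are $(M_n)_{kj}=\mathbb{E}[(X_{t_k}-X_{t_{k-1}})(X_{t_j}-X_{t_{j-1}})]/\sqrt{\phi(\Delta t_k)\phi(\Delta t_j)}$, and $S_n=\sum_k(G_k^2-\mathbb{E}G_k^2)$ is a centred element of the second Wiener chaos. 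Diagonalising the symmetric positive semidefinite matrix $M_n$, one represents $S_n=\sum_i\lambda_i^{(n)}(Z_i^2-1)$ with $Z_i$ i.i.d.\ standard normal and $\lambda_i^{(n)}\geq0$ the eigenvalues of $M_n$.

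First I would establish the $L^2$-estimate, which yields convergence in probability. Since the increments are jointly Gaussian, Wick's formula (Isserlis' theorem) gives $\mathrm{Cov}((X_{t_k}-X_{t_{k-1}})^2,(X_{t_j}-X_{t_{j-1}})^2)=2\big(\mathbb{E}[(X_{t_k}-X_{t_{k-1}})(X_{t_j}-X_{t_{j-1}})]\big)^2$, so that
\[
\mathbb{E}[S_n^2]=2\sum_{k,j}(M_n)_{kj}^2=2\|M_n\|_F^2=2\sum_i(\lambda_i^{(n)})^2 .
\]
I would then exploit two structural facts about $M_n$. Being a covariance matrix it is positive semidefinite, so $\sum_i\lambda_i^{(n)}=\mathrm{tr}(M_n)=\sum_k \mathbb{E}(X_{t_k}-X_{t_{k-1}})^2/\phi(\Delta t_k)=:D_n$; and, by the Gershgorin (maximal absolute row sum) bound for symmetric matrices, $\max_i\lambda_i^{(n)}=\|M_n\|_{\mathrm{op}}\leq\max_j\sum_k|(M_n)_{kj}|\leq H(|\pi_n|)$, which is precisely the hypothesis. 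Using $\sum_i(\lambda_i^{(n)})^2\leq(\max_i\lambda_i^{(n)})\sum_i\lambda_i^{(n)}$ this yields $\mathbb{E}[S_n^2]\leq 2H(|\pi_n|)\,D_n$. Here $D_n$ is exactly the deterministic sum in the statement, which is bounded in all situations of interest (it converges to the limiting integrated quantity), so $H(|\pi_n|)\to0$ forces $\mathbb{E}[S_n^2]\to0$, whence $S_n\to0$ in $L^2$ and in probability. The \emph{main obstacle} is precisely this variance bound: estimating $\|M_n\|_F^2$ crudely by (number of entries) $\times$ (squared maximal entry) diverges with $N(\pi_n)$, and the decisive point is to use positive semidefiniteness to replace the entry count by the trace $D_n$, turning the hypothesised operator-norm smallness into a genuinely small factor.

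For the almost sure statement under $H(|\pi_n|)=o(1/\log n)$ I would upgrade the $L^2$-bound to a tail estimate and invoke Borel--Cantelli. Since $S_n$ is a quadratic form in a Gaussian vector, the Hanson--Wright / second-chaos concentration inequality provides constants $c>0$ with
\[
\mathbb{P}(|S_n|>\epsilon)\leq 2\exp\!\left(-c\min\!\left(\frac{\epsilon^2}{\mathbb{E}[S_n^2]},\frac{\epsilon}{\|M_n\|_{\mathrm{op}}}\right)\right).
\]
Both $\mathbb{E}[S_n^2]\leq 2H(|\pi_n|)D_n$ and $\|M_n\|_{\mathrm{op}}\leq H(|\pi_n|)$ are controlled by $H(|\pi_n|)$, so for fixed $\epsilon$ and large $n$ whichever term is active makes the exponent at least of order $\epsilon^2/H(|\pi_n|)$ up to constants. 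The condition $H(|\pi_n|)=o(1/\log n)$ then forces this exponent to exceed $g(n)\log n$ for some $g(n)\to\infty$, giving $\mathbb{P}(|S_n|>\epsilon)\leq 2\exp(-c\epsilon^2 g(n)\log n)=2\,n^{-c\epsilon^2 g(n)}$, which is summable; Borel--Cantelli yields $S_n\to0$ almost surely. Here the subtlety is that fixed polynomial moments are too weak, since they only give $\mathbb{P}(|S_n|>\epsilon)\lesssim H(|\pi_n|)^m$ and $\sum_n(\log n)^{-m}$ diverges for every fixed $m$; one genuinely needs the full sub-gamma concentration of the chaos, in which the operator-norm bound $\|M_n\|_{\mathrm{op}}\leq H(|\pi_n|)$ governs the exponential tail.
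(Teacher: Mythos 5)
Note first that the paper itself contains no proof of this statement: Theorem \ref{theorem:BE_bound_QV} is imported verbatim, with citation, from \cite[Theorem 3.1]{Lauri Viitasaari}, so your argument can only be measured against correctness and against the source's proof. Your route is the standard one for results of this type and, as far as one can tell, essentially the one in the cited source: view the normalised increments as a Gaussian vector with covariance matrix $M_n$, so that $S_n$ is a second-chaos element equal in law to $\sum_i\lambda_i^{(n)}(Z_i^2-1)$; compute $\mathbb{E}[S_n^2]=2\Vert M_n\Vert_F^2$ by Isserlis; bound $\Vert M_n\Vert_{\mathrm{op}}$ by the maximal absolute row sum, which is exactly the hypothesised quantity $H(|\pi_n|)$; use $\Vert M_n\Vert_F^2\leq \Vert M_n\Vert_{\mathrm{op}}\,\mathrm{tr}(M_n)$, valid since $M_n$ is positive semidefinite; and upgrade to almost sure convergence via a second-chaos exponential tail bound (Hanson--Wright/Laurent--Massart) plus Borel--Cantelli. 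Your remark that fixed polynomial moments cannot reach the almost sure conclusion under $H(|\pi_n|)=o(1/\log n)$, because $\sum_n(\log n)^{-m}$ diverges for every fixed $m$, is correct and identifies the real reason an exponential bound is indispensable.

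The genuine gap is the trace. Your variance bound is $\mathbb{E}[S_n^2]\leq 2H(|\pi_n|)D_n$ with $D_n=\mathrm{tr}(M_n)=\sum_k\mathbb{E}(X_{t_k}-X_{t_{k-1}})^2/\phi(\Delta t_k)$, and you dispose of $D_n$ by asserting it is ``bounded in all situations of interest''. Boundedness of $D_n$ is not implied by the stated hypothesis, and it cannot be derived from it: the theorem as quoted is in fact false without some such assumption. Take $X$ a standard Brownian motion, $t_k=k/n$ on $[0,1]$, and $\phi(t)=t^{3/4}$; then $M_n=n^{-1/4}I_n$ (the increments are independent), so the hypothesis holds with $H(|\pi_n|)=n^{-1/4}=o(1/\log n)$, while the difference in the conclusion equals
\[
S_n=n^{3/4}\left(\sum_{k=1}^{n}(\Delta_k X)^2-1\right)=n^{-1/4}\sum_{k=1}^{n}\left(Z_k^2-1\right),
\]
with $Z_k$ i.i.d.\ standard normal; this has variance $2\sqrt{n}\to\infty$ and is asymptotically $\sqrt{2}\,n^{1/4}$ times a standard normal, so it does not converge to zero in any sense. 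Here $D_n=n^{3/4}\to\infty$, which is precisely the failure your bound warns about. The repair is to add the (necessary) standing assumption $\sup_n D_n<\infty$ --- this is how the normalising function $\phi$ is constrained in the cited source, and it does hold in this paper's application, where $\phi$ is chosen so that $\mathbb{E}(X_{t_k}-X_{t_{k-1}})^2/\phi(\Delta t_k)$ is of order $\Delta t_k$, whence $D_n\lesssim T$. With that hypothesis made explicit your proof is complete; as written, it silently uses an assumption that the quoted statement does not grant you.
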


The following lemma gives easy way to compute the function $H(n)$ (see \cite[Theorem 3.3]{Lauri
Viitasaari}).

\begin{lemma}[\protect\cite{Lauri Viitasaari}]
\label{lemma:lemma of Lauri} \label{lma:rate} Let $X$ be a continuous
Gaussian process such that the function $d(s,t)=E(X_{t}-X_{s})^{2}$ is in $%
C^{1,1}$ outside diagonal. Furthermore, assume that
\begin{equation}
\label{eq:derivative_cond_needed}
|\partial_{st} d(s,t)|=O\left(|t-s|^{2H-2}\right)
\end{equation}
for some $H\in (0,1), H \neq \frac12$. Then
\begin{equation*}
\max_{1\leqslant j \leqslant n} \sum_{k=1}^n \left\vert\mathrm{I\kern-0.16em
E}(\Delta_k X \Delta_j X)\right\vert \leqslant \max_{1\leqslant j \leqslant
n} d\left(\frac{j}{n},\frac{j-1}{n}\right)+ \left(\frac{1}{n}%
\right)^{1\wedge 2H}.
\end{equation*}
\end{lemma}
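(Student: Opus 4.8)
The plan is to reduce everything to a second-order finite difference of the incremental variance $d$ and then to a double integral of its mixed partial. Writing $t_k=\frac{k}{n}$ and $d(s,t)=E(X_t-X_s)^2$, I would first record the elementary polarization identity for Gaussian increments, which one checks directly by expanding in terms of the covariance $R(a,b)=E[X_aX_b]$:
\begin{equation*}
E(\Delta_k X\,\Delta_j X)=\frac12\left[d(t_{k-1},t_j)+d(t_k,t_{j-1})-d(t_{k-1},t_{j-1})-d(t_k,t_j)\right].
\end{equation*}
In particular the diagonal term $k=j$ collapses, using the symmetry of $d$ and $d(s,s)=0$, to $E(\Delta_j X)^2=d(t_{j-1},t_j)=d(\frac{j-1}{n},\frac{j}{n})$, which already accounts for the term $\max_j d(\frac{j}{n},\frac{j-1}{n})$ on the right-hand side. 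It then remains to control the off-diagonal sum $\sum_{k\neq j}|E(\Delta_k X\,\Delta_j X)|$.

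For $k\neq j$ the closed rectangle $R_{kj}=[t_{k-1},t_k]\times[t_{j-1},t_j]$ meets the diagonal $\{s=t\}$ at most in a corner, so $d$ is $C^{1,1}$ on its interior and applying the fundamental theorem of calculus twice rewrites the second-order difference above as
\begin{equation*}
E(\Delta_k X\,\Delta_j X)=-\frac12\int_{t_{k-1}}^{t_k}\int_{t_{j-1}}^{t_j}\partial_{st}d(s,t)\,dt\,ds.
\end{equation*}
The hypothesis $|\partial_{st}d(s,t)|=O(|t-s|^{2H-2})$ then gives $|E(\Delta_k X\,\Delta_j X)|\leq C\int_{R_{kj}}|t-s|^{2H-2}\,ds\,dt$, and since the rectangles $R_{kj}$ (with $j$ fixed and $k$ varying) are disjoint and tile the strip $[0,1]\times[t_{j-1},t_j]$, the problem reduces to estimating $\iint|t-s|^{2H-2}$ over pieces of this strip.

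Here the two regimes $H>\frac12$ and $H<\frac12$ must be separated, and this is the technical heart of the argument. When $H>\frac12$ the exponent satisfies $2H-2>-1$, so $|t-s|^{2H-2}$ is integrable across the diagonal; bounding the whole off-diagonal sum by $\int_{t_{j-1}}^{t_j}\int_0^1|t-s|^{2H-2}\,ds\,dt$ and using $\int_0^1|t-s|^{2H-2}\,ds\leq C$ uniformly in $t$ yields a bound of order $\frac1n=(\frac1n)^{1\wedge 2H}$. When $H<\frac12$ the singularity is non-integrable, so I would first isolate the two rectangles adjacent to the diagonal, $k=j\pm1$; a direct computation (substituting $a=s-t_j$, $b=t_j-t$) shows each of these contributes a term of exact order $(\frac1n)^{2H}$. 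For the remaining indices $|k-j|\geq2$ the rectangles lie in the region $\{|s-t|\geq\frac1n\}$, where by monotonicity of the integrand the tiled integral is at most $\int_{t_{j-1}}^{t_j}\big(\int_{|s-t|\geq1/n}|s-t|^{2H-2}\,ds\big)\,dt\leq\frac{C}{1-2H}\,n^{1-2H}\cdot\frac1n=C(\frac1n)^{2H}$. Summing the three pieces again gives $(\frac1n)^{2H}=(\frac1n)^{1\wedge 2H}$.

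The main obstacle is precisely this last estimation in the case $H<\frac12$: one cannot naively integrate $|t-s|^{2H-2}$ over the full strip because of the non-integrable singularity, so the near-diagonal terms have to be computed by hand and the far terms controlled through the tiling and monotonicity argument. A secondary point requiring care is justifying the double-integral representation on the rectangles $k=j\pm1$, whose corner touches the diagonal: there the inner integral is improper but convergent, so the identity holds as a limit and the bound is unaffected. Taking the maximum over $j$ and collecting constants then yields the stated inequality.
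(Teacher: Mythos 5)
The paper itself gives no proof of this lemma: it is imported verbatim from the cited reference \cite{Lauri Viitasaari}, so there is no in-paper argument to compare yours against. Judged on its own merits, your proof is correct, and it follows what is essentially the canonical route for such covariance-summation bounds (and the route of the cited source): the polarization identity expressing $E(\Delta_k X\,\Delta_j X)$ as a second-order difference of $d$, the identification of the diagonal term $k=j$ with $d\left(\frac{j-1}{n},\frac{j}{n}\right)$, the representation of the off-diagonal terms as $-\frac12\iint_{R_{kj}}\partial_{st}d\,ds\,dt$ (with the limiting argument you correctly flag for the rectangles $k=j\pm 1$, whose corner touches the diagonal; integrability of $|t-s|^{2H-2}$ there for $H>0$ makes the improper integral converge), and the case split into $H>\frac12$ (integrable singularity, strip bound of order $n^{-1}$) versus $H<\frac12$ (explicit computation on the two adjacent rectangles plus the tiling/monotonicity bound of order $n^{-2H}$ over $|k-j|\geq 2$). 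All of the individual estimates check out. The only discrepancy is cosmetic: your bound necessarily carries a multiplicative constant $C$ depending on $H$ and on the implied constant in the hypothesis $|\partial_{st}d(s,t)|=O\left(|t-s|^{2H-2}\right)$, whereas the lemma as stated displays the term $\left(\frac{1}{n}\right)^{1\wedge 2H}$ with no constant; since the hypothesis is only an $O(\cdot)$ bound, no proof can avoid this constant, and it should be read as implicit in the statement.
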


Finally, in order to study stable convergence in law we recall the following general convergence result taken from \cite{CNP}.
\begin{theorem}[\protect\cite{CNP}]
\label{theorem: Hypotheses} Let $(\Omega,\mathcal{F},P)$ be a complete
probability space. Fix a time interval $[0,T]$ and consider a double
sequence of random variables $\xi=\{\xi_{i,m},m\in Z_{+},1\leqslant i
\leqslant [mT]\}.$ Assume the double sequence $\xi$ satisfies the following
hypotheses.\newline
\newline
(H1) Denote $g_{m}(t):=\sum_{i=1}^{[mt]}\xi_{i,m}$. The finite dimensional
distributions of the sequence of processes $\{g_{m}(t),t \in [0,T]\}$
converges $\mathcal{F}$-stably to those of $\{B(t), t\in [0,T]\}$ as $%
m\rightarrow\infty$, where $\{B(t), t\in [0,T]\}$ is a standard Brownian
motion independent of $\mathcal{F}$.\newline
\newline
(H2) $\xi$ satisfies the tightness condition $\mathrm{I\kern-0.16em E}%
\left|\sum_{i=j+1}^{k}\xi_{i,m}\right|^4\leqslant C \left(\frac{k-j}{m}%
\right)^2$ for any $1\leqslant j \leqslant k\leqslant [mT]$.\newline
\newline
If $\{f(t), t\in [0,T]\}$ is an $\alpha-$H\"older continuous process with $%
\alpha>1/2$ and we set $X_{m}(t):=\sum_{i=1}^{[mt]}f(\frac{i}{m})\xi_{i,m},$
then we have the $\mathcal{F}$-stable convergence
\begin{eqnarray}
X_{m}(t)&\underset{m\rightarrow\infty}{\overset{Law}{\longrightarrow}}%
&\int_{0}^{t}f(s)dB_{s},  \notag
\end{eqnarray}
in the Skorohod space $\mathcal{D}[0,T].$
\end{theorem}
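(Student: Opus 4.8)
The plan is to recognize that $X_m$ is nothing but a discrete Riemann--Stieltjes sum of $f$ against the partial sum process. Writing $g_m(t)=\sum_{i=1}^{[mt]}\xi_{i,m}$ as in (H1), the increments satisfy $\xi_{i,m}=g_m\left(\frac{i}{m}\right)-g_m\left(\frac{i-1}{m}\right)$, so that $X_m(t)=\sum_{i=1}^{[mt]}f\left(\frac{i}{m}\right)\left[g_m\left(\frac{i}{m}\right)-g_m\left(\frac{i-1}{m}\right)\right]$. First I would use (H2) to establish tightness of $\{g_m\}$ in the Skorohod space $\mathcal{D}[0,T]$: the bound $\mathrm{I\kern-0.16em E}|\sum_{i=j+1}^k \xi_{i,m}|^4\le C((k-j)/m)^2$ is precisely a Kolmogorov/Billingsley-type moment criterion, and combined with the $\mathcal{F}$-stable convergence of the finite dimensional distributions from (H1) it yields $g_m\to B$ stably in $\mathcal{D}[0,T]$, with $B$ a standard Brownian motion independent of $\mathcal{F}$. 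The goal then becomes to transfer this convergence through the Riemann-sum functional and identify the limit as $\int_0^t f(s)\,dB_s$.

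Next I would prove the statement first for piecewise constant integrands. If $f=\sum_{j}c_j\1_{[s_j,s_{j+1})}$ is a step function, then $X_m(t)$ telescopes into a finite linear combination of increments of $g_m$, so its $\mathcal{F}$-stable convergence to the corresponding linear combination of increments of $B$, namely $\int_0^t f(s)\,dB_s$, follows immediately from the stable convergence of $g_m$ by the continuous mapping property of stable convergence.

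To pass from step functions to a general $\alpha$-H\"older $f$ with $\alpha>1/2$, I would approximate $f$ by step functions $f^{(N)}$ on a dyadic mesh of size $2^{-N}$, so that $\|f-f^{(N)}\|_{\infty}\le\|f\|_{\alpha}2^{-N\alpha}$. The key estimate is a uniform-in-$m$ bound on the error $X_m(t)-X_m^{(N)}(t)=\sum_{i=1}^{[mt]}\left(f\left(\frac{i}{m}\right)-f^{(N)}\left(\frac{i}{m}\right)\right)\xi_{i,m}$. I would obtain it by summation by parts, expressing this sum through the partial sums $g_m$ and the increments of $f-f^{(N)}$, and then bounding the resulting quantities using the H\"older continuity of $f$ together with the moment control supplied by (H2); this is exactly where $\alpha>1/2$ enters, since it places $f$, of finite $(1/\alpha)$-variation, and the Brownian-type limit, of finite $q$-variation for any $q>2$, in the Young regime $\alpha+1/q>1$ required for (\ref{Young inequality}). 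An analogous and simpler estimate, using the finite $q$-variation of Brownian motion and Young's inequality (\ref{Young inequality}), controls $\int_0^t f\,dB-\int_0^t f^{(N)}\,dB$. Finally, a converging-together argument for stable convergence---the step-function convergence of $X_m^{(N)}$, the convergence of the limits as $N\to\infty$, and the vanishing of the error uniformly in $m$---yields the claimed $\mathcal{F}$-stable convergence of $X_m(t)$ to $\int_0^t f(s)\,dB_s$ in $\mathcal{D}[0,T]$.

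The main obstacle is the uniform-in-$m$ control of the approximation error $X_m-X_m^{(N)}$: unlike the plain weak-convergence case one must keep track of the $\mathcal{F}$-stability throughout the limiting procedure, and the error estimate must genuinely exploit both the exponent $\alpha>1/2$ and the fourth-moment bound (H2) simultaneously. Making the summation-by-parts bound quantitative enough that $\limsup_m \mathrm{I\kern-0.16em E}|X_m(t)-X_m^{(N)}(t)|^2\to 0$ as $N\to\infty$, while respecting the Skorohod topology uniformly on $[0,T]$, is the technical heart of the argument.
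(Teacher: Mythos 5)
First, a point of reference: the paper does not prove this statement at all. It is quoted verbatim from \cite{CNP} and used as a black box in the proof of Theorem \ref{theorem:distribution asymptotic} (where only the hypotheses (H1) and (H2) are verified). So your proposal can only be compared with the strategy of the cited source, which is indeed the blocking/discretization route you outline: tightness of $g_m$ from (H2), stable finite-dimensional convergence from (H1), step-function approximation of $f$, and a converging-together argument. At the architectural level your plan is the standard and correct one.

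However, the step you yourself call ``the technical heart'' contains a genuine gap, and as sketched it fails. Work on a block of the dyadic mesh, say grid indices $a\leq i\leq b$ with $b-a\approx m2^{-N}$, and write $h_i=f(i/m)-f^{(N)}(i/m)$ and $G_i=g_m(i/m)-g_m((a-1)/m)$. After summation by parts, the boundary term is harmless: it is of order $2^{-N\alpha}\cdot 2^{-N/2}$ per block, hence of order $2^{N(1/2-\alpha)}\rightarrow 0$ after summing over the $\approx 2^N$ blocks, and this is exactly where $\alpha>1/2$ enters. But the remaining sum $\sum_{i=a}^{b-1}(h_{i+1}-h_i)G_i$ cannot be bounded term by term: $|h_{i+1}-h_i|\lesssim \|f\|_\alpha m^{-\alpha}$, $E|G_i|\lesssim 2^{-N/2}$ by (H2), and there are $\approx m2^{-N}$ terms per block and $\approx 2^N$ blocks, giving a total of order $m^{1-\alpha}2^{-N/2}$, which blows up as $m\rightarrow\infty$ for every fixed $N$ when $\alpha<1$. (Cauchy--Schwarz in place of Abel summation produces the same blow-up.) Your appeal to the Young regime $\alpha+1/q>1$ does not repair this, because you invoke the finite $q$-variation, $q>2$, of ``the Brownian-type limit'' $B$ --- yet the error term involves the prelimit processes $g_m$, not $B$. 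What is actually needed is a discrete Young--Lo\`eve estimate per block,
\begin{equation*}
\left|\sum_{i=a}^{b}h_i(G_i-G_{i-1})-h_a(G_b-G_{a-1})\right|\leq C_{p,q}\,var_p(h;\cdot)\,var_q(G;\cdot),\qquad p=1/\alpha,\ q\in\left(2,\tfrac{1}{1-\alpha}\right),
\end{equation*}
combined with a uniform-in-$m$ moment bound $E\left[var_q(g_m;\text{block})\right]\lesssim 2^{-N/2}$. The latter does follow from (H2), but only through a nontrivial dyadic chaining (Besov--variation embedding) argument that your proposal neither states nor proves; (H2) by itself is a fourth-moment bound and yields no $q$-variation control for free. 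A second, smaller defect: $f$ is only assumed to be an $\alpha$-H\"older continuous \emph{process}, possibly dependent on $\xi$ and with no integrability assumed, so your criterion $\limsup_m E|X_m(t)-X_m^{(N)}(t)|^2\rightarrow 0$ need not even be well defined; one must instead localize on events $\{\|f\|_\alpha\leq K\}$ and establish uniform smallness in probability rather than in $L^2$.
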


We study a stochastic process 
$\int_{0}^{t}u_{s}dY_{s}^{(1)}$, 
where $u$ is a H\"older continuous process of order $\beta > 1-H$. Note that, as $Y^{(1)}$ is H\"older continuous of order $H-\varepsilon$ (a fact that is easily verified by using the integration by parts), the integral can be understood as a Riemann-Stieltjes integral. In particular, the process is well-defined.

We are now ready to state our first main result that provides us the uniform strong consistency.

\begin{theorem}
\label{theorem:Consistency} Suppose that $u=\{u_{t},t\in[0,T]\}$ is an
H\"older continuous stochastic process of order $\beta$ with $\beta > 1-H$, and set
\begin{eqnarray}
\label{eq:process_Z}
Z_{t} &=& \int_{0}^{t}u_{s}dY_{s}^{(1)}.
\end{eqnarray}
Then, as $n$ tends to infinity,
\begin{eqnarray}
n^{2H-1}V_{n}(Z)_{t} &\longrightarrow &
\int_{0}^{t}|u_{s}|^2ds,
\end{eqnarray}
almost surely and uniformly in $t$.
\end{theorem}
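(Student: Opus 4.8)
The plan is to reduce the statement about the integral process $Z_t=\int_0^t u_s\,dY_s^{(1)}$ to a statement about the quadratic variation of the driving Gaussian process $Y^{(1)}$ alone, and then to apply the general Gaussian machinery of Theorem~\ref{theorem:BE_bound_QV} and Lemma~\ref{lemma:lemma of Lauri}. The first step is a localization of the increments: on each subinterval $[\frac{i-1}{n},\frac{i}{n}]$ I would write
\begin{equation*}
Z_{\frac{i}{n}}-Z_{\frac{i-1}{n}} = u_{\frac{i-1}{n}}\left(Y^{(1)}_{\frac{i}{n}}-Y^{(1)}_{\frac{i-1}{n}}\right) + R_{i,n},
\end{equation*}
where $R_{i,n}=\int_{\frac{i-1}{n}}^{\frac{i}{n}}\left(u_s-u_{\frac{i-1}{n}}\right)dY_s^{(1)}$ is the Young-integral remainder. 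Using the Young inequality \eqref{Young inequality} together with the H\"older bound \eqref{eq:holder_var}, and the fact that $u$ is $\beta$-H\"older while $Y^{(1)}$ is $(H-\varepsilon)$-H\"older with $\beta+H>1$, I expect $|R_{i,n}|$ to be of order $n^{-(\beta+H)+\varepsilon}$ uniformly in $i$. Squaring and summing the cross terms and the remainder terms, the condition $\beta+H>1$ should guarantee that their total contribution to $n^{2H-1}V_n(Z)_t$ vanishes almost surely and uniformly in $t$, leaving only the main term $n^{2H-1}\sum_i u_{\frac{i-1}{n}}^2\bigl(\Delta_i Y^{(1)}\bigr)^2$.

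The second step is to handle this main term. I would first replace the random increments $\bigl(\Delta_i Y^{(1)}\bigr)^2$ by their expectations $\mathrm{I\kern-0.16em E}\bigl(\Delta_i Y^{(1)}\bigr)^2$. This is exactly where Theorem~\ref{theorem:BE_bound_QV} enters: applied to $X=Y^{(1)}$ with $\phi(\Delta t)=\mathrm{I\kern-0.16em E}(\Delta_i Y^{(1)})^2$, it gives the vanishing of the normalized difference between the quadratic variation and its mean, and Lemma~\ref{lemma:lemma of Lauri} provides the rate $H(|\pi_n|)$ through the bound \eqref{eq:derivative_cond_needed} on the mixed derivative of $d(s,t)=\mathrm{I\kern-0.16em E}(Y^{(1)}_t-Y^{(1)}_s)^2$. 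To invoke these I need to verify that $Y^{(1)}$ is a continuous Gaussian process whose incremental variance behaves like $n^{-2H}$ locally and whose mixed derivative satisfies $|\partial_{st}d(s,t)|=O(|t-s|^{2H-2})$; this should follow from the explicit representation $Y_t^{(1)}=\int_0^t e^{-s}dB_{a_s}$ with $a_t=He^{t/H}$, for which the covariance can be written out and differentiated. Crucially, to obtain the almost sure conclusion I must check that the resulting rate satisfies $H(|\pi_n|)=o(1/\log n)$, which the polynomial rate coming from Lemma~\ref{lemma:lemma of Lauri} comfortably does.

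The third step is to identify the deterministic limit. Having reduced matters to $n^{2H-1}\sum_{i=1}^{[nt]}u_{\frac{i-1}{n}}^2\,\mathrm{I\kern-0.16em E}\bigl(\Delta_i Y^{(1)}\bigr)^2$, I would compute the local variance asymptotics $n^{2H}\,\mathrm{I\kern-0.16em E}\bigl(\Delta_i Y^{(1)}\bigr)^2 \to c$ for an explicit constant $c$ (which the normalization forces to be $1$, matching the stated limit $\int_0^t|u_s|^2ds$), uniformly over $i$. Then $n^{2H-1}\sum_{i=1}^{[nt]}u_{\frac{i-1}{n}}^2\,\mathrm{I\kern-0.16em E}\bigl(\Delta_i Y^{(1)}\bigr)^2$ is essentially a Riemann sum for $\int_0^t|u_s|^2ds$, and since $u$ is (almost surely) continuous, the Riemann sums converge uniformly in $t$ on $[0,T]$ along the deterministic subsequence of dyadic-type meshes. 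Uniformity in $t$ comes for free because the partial sums are monotone in $[nt]$ and the limit is a continuous increasing function of $t$, so pointwise convergence upgrades to uniform convergence by a Dini-type argument.

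I expect the main obstacle to be the precise variance and mixed-derivative estimates for $Y^{(1)}$ in the regime $H<\frac12$, where the covariance structure is more delicate and the constant $c$ and the error rate must be controlled uniformly down to the diagonal; establishing the bound \eqref{eq:derivative_cond_needed} with the correct exponent, and verifying that the local variance $\mathrm{I\kern-0.16em E}\bigl(\Delta_i Y^{(1)}\bigr)^2$ has a clean leading-order $n^{-2H}$ behaviour with a negligible correction, is the technical heart of the argument. A secondary difficulty is making the remainder estimate from the first step uniform in $t$ almost surely rather than merely in probability, which requires the $\beta+H>1$ margin and a Borel--Cantelli or maximal-inequality argument over the sum of remainder terms.
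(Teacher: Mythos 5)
Your steps 1 and 3 are sound and coincide with what the paper does for its terms $A^{(m)}$ and $D^{(n)}$: the Young-inequality localization (which, with $\beta+H>1$, makes the remainder and cross terms vanish at rate $n^{-\beta+2\varepsilon}$) and the Riemann-sum identification of the limit, with the variance asymptotics $\mathrm{I\kern-0.16em E}\bigl(\Delta_i Y^{(1)}\bigr)^2 = n^{-2H}(1+o(1))$ supplied by the variogram computation (Lemma \ref{lemma:variogram function}, which together with the stationarity of the increments of $Y^{(1)}$ gives uniformity in $i$). The genuine gap is in your step 2. After localization your main term is the \emph{weighted} centered sum
\begin{equation*}
n^{2H-1}\sum_{i=1}^{[nt]} u_{\frac{i-1}{n}}^2\left[\bigl(\Delta_i Y^{(1)}\bigr)^2 - \mathrm{I\kern-0.16em E}\bigl(\Delta_i Y^{(1)}\bigr)^2\right],
\qquad \Delta_i Y^{(1)} = Y^{(1)}_{\frac{i}{n}}-Y^{(1)}_{\frac{i-1}{n}},
\end{equation*}
whose random weights change with $n$ and may depend on $Y^{(1)}$. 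Theorem \ref{theorem:BE_bound_QV} controls only the \emph{unweighted} centered sum, and boundedness of the weights does not let you pass from one to the other, because the centered summands have no fixed sign. Abel summation does not close the gap with the stated tools either: the variation of the weights along the partition is only $O(n^{1-\beta})$ (H\"older weights are not of bounded variation uniformly in $n$), so you would need a quantitative rate for the running maximum of the unweighted centered sums, which Theorem \ref{theorem:BE_bound_QV} as quoted does not provide (it gives a qualitative a.s.\ limit of the full sum under the $o(1/\log n)$ condition). There is also a smaller but real normalization error: with your choice $\phi(\Delta t)=\mathrm{I\kern-0.16em E}\bigl(\Delta_i Y^{(1)}\bigr)^2\asymp \Delta t^{2H}$, the hypothesis of Theorem \ref{theorem:BE_bound_QV} fails (the left-hand side of the condition is then of order $1$, not $o(1)$), and the corresponding conclusion would in fact be false; $\phi$ must also absorb the averaging factor $\Delta t$, so that each normalized block sum targets the interval length rather than the constant $1$.

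The missing idea is precisely the paper's two-scale decomposition, which exists to circumvent this weighted-sum problem. The paper takes two meshes $m\geq n$, freezes $u$ at the coarse scale $1/n$ (its term $B^{(n,m)}$, at H\"older cost $n^{-\beta}$), and computes the quadratic variation of $Y^{(1)}$ at the fine scale $1/m$ inside each coarse block (its term $C^{(n,m)}$): for fixed $n$ there are only $[nT]$ blocks and the weights $u_{\frac{i-1}{n}}^2$ are constants independent of $m$, so the unweighted Theorem \ref{theorem:BE_bound_QV} combined with Lemma \ref{lemma:lemma of Lauri} applies block by block; one then lets $m\to\infty$ first and $n\to\infty$ afterwards, each term being bounded uniformly in $t$. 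To rescue your single-mesh plan you would need either this device or a rate-explicit, uniform-in-time strengthening of the unweighted convergence result, neither of which is available from the statements you invoke.
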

Before proving the statement we introduce one auxiliary lemma. 
Let $\upsilon_{Y^{(1)}}$ be the variogram of the process $Y^{(1)}$, i.e.
\begin{eqnarray}
\upsilon_{Y^{(1)}}(t) &=& \frac{1}{2}\mathrm{I\kern-0.16em E}%
\left(Y^{(1)}_{t+s}-Y_{s}^{(1)}\right)^{2}.
\end{eqnarray}
We also use the standard notation $f(t) = o(g(t))$ as $t\rightarrow 0$, if 
$$
\lim_{t\rightarrow 0}\frac{|f(t)|}{|g(t)|} = 0.
$$
\begin{lemma}
\label{lemma:variogram function}The variogram function $\upsilon_{Y^{(1)}}$,
satisfies
\begin{eqnarray}
\upsilon_{Y^{(1)}}(t) &= & \frac{t^{2H}}{2} + o\left(t^{2H}\right) \ \ as \ \ t\rightarrow 0^+.  \notag
\end{eqnarray}
\end{lemma}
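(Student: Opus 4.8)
The plan is to reduce the increment $Y^{(1)}_{t+s}-Y^{(1)}_s$ to a Wiener integral against $B^H$ over a short interval, isolate the leading fractional--Brownian fluctuation, and control the remainder in $L^2$. First I would apply the deterministic change of variables $v=a_r=He^{r/H}$ in the pathwise Riemann--Stieltjes integral defining the increment. Since $a$ is a smooth increasing bijection and $e^{-r}=(H/v)^H$ when $v=a_r$, writing $h(v)=(H/v)^H$ one obtains
\begin{equation*}
Y^{(1)}_{t+s}-Y^{(1)}_s=\int_{s}^{s+t}e^{-r}\,dB^{H}_{a_r}=\int_{a_s}^{a_{s+t}}h(v)\,dB^{H}_v.
\end{equation*}
I would then split off the value of $h$ at the left endpoint,
\begin{equation*}
\int_{a_s}^{a_{s+t}}h(v)\,dB^{H}_v=h(a_s)\big(B^{H}_{a_{s+t}}-B^{H}_{a_s}\big)+\int_{a_s}^{a_{s+t}}\big(h(v)-h(a_s)\big)\,dB^{H}_v=:M+R,
\end{equation*}
and compute $E(M+R)^2=EM^2+2E[MR]+ER^2$.

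For the main term $M$, using $E(B^H_b-B^H_a)^2=(b-a)^{2H}$ together with $h(a_s)=(H/a_s)^H=e^{-s}$ and $a_{s+t}-a_s=He^{s/H}(e^{t/H}-1)$, I get $EM^2=e^{-2s}(a_{s+t}-a_s)^{2H}=H^{2H}(e^{t/H}-1)^{2H}$. Note that this is independent of $s$, which reflects the stationarity of the increments of $Y^{(1)}$ and makes the variogram a genuine function of $t$ alone. Expanding $e^{t/H}-1=\tfrac tH+O(t^2)$ yields $EM^2=t^{2H}+o(t^{2H})$ as $t\to0^+$, which already produces the claimed leading coefficient after multiplication by $\tfrac12$.

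The core of the argument is to show $ER^2=o(t^{2H})$; the cross term is then handled by Cauchy--Schwarz, $|E[MR]|\leq (EM^2)^{1/2}(ER^2)^{1/2}$. Since $h$ is $C^1$ it has finite $1$-variation and $B^H$ has finite $q$-variation for every $q>1/H$, so $R$ is a well-defined Young integral admitting integration by parts. Writing $\widetilde{B}_v=B^H_v-B^H_{a_s}$ and using that the integrand $h(v)-h(a_s)$ and $\widetilde{B}_v$ both vanish at the left endpoint $v=a_s$,
\begin{equation*}
R=\big(h(a_{s+t})-h(a_s)\big)\widetilde{B}_{a_{s+t}}-\int_{a_s}^{a_{s+t}}\widetilde{B}_v\,h'(v)\,dv.
\end{equation*}
Here $h'(v)=-H^{H+1}v^{-H-1}$, and since $v\geq a_s\geq a_0=H$ one has the uniform bound $|h'(v)|\leq1$; moreover $E\widetilde{B}_v^2=(v-a_s)^{2H}\leq(a_{s+t}-a_s)^{2H}$. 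The first term has second moment at most $(a_{s+t}-a_s)^{2}(a_{s+t}-a_s)^{2H}$, while for the second I would use Minkowski's integral inequality to bound its $L^2$ norm by $\int_{a_s}^{a_{s+t}}(v-a_s)^{H}|h'(v)|\,dv\leq(a_{s+t}-a_s)^{H+1}$. For $s$ in the compact interval $[0,T]$ we have $a_{s+t}-a_s\leq C_T t$, whence $ER^2\leq Ct^{2H+2}$ and $|E[MR]|\leq Ct^{2H+1}$, both $o(t^{2H})$. Combining the three contributions gives $E(Y^{(1)}_{t+s}-Y^{(1)}_s)^2=t^{2H}+o(t^{2H})$, that is $\upsilon_{Y^{(1)}}(t)=\tfrac{t^{2H}}{2}+o(t^{2H})$.

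The main obstacle is precisely the remainder estimate: one must extract a genuinely lower-order contribution ($o(t^{2H})$, not merely $O(t^{2H})$, which would otherwise corrupt the leading coefficient) while covering all $H\in(0,1)$ by a single argument. Routing $R$ through the pathwise integration by parts above is what makes this clean, since it reduces everything to elementary $L^2$ estimates on increments of $B^H$ and avoids the case distinction $H<\tfrac12$ versus $H>\tfrac12$ that the explicit Wiener-integral covariance formulas would otherwise force upon us.
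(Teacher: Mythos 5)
Your proof is correct, and it takes a genuinely different route from the paper's. The paper invokes Proposition 12 of Kaarakka--Salminen, which gives the variogram in closed form as $2\int_0^{t}(t-x)k_H(x)\,dx$ with an explicit kernel $k_H$, and then extracts the constant $\frac12$ by a change of variables and two applications of L'H\^{o}pital's rule. You instead work directly on the integral representation: after the substitution $v=a_r$ you freeze the integrand at the left endpoint, so the increment splits into an exact fBm fluctuation $M$ with $EM^2=H^{2H}(e^{t/H}-1)^{2H}=t^{2H}+O(t^{2H+1})$, plus a remainder $R$ that you control in $L^2$ via pathwise integration by parts, Minkowski's integral inequality and Cauchy--Schwarz; each step checks out (the substitution rule for Riemann--Stieltjes integrals, $h(a_s)=e^{-s}$, $|h'(v)|\leq 1$ for $v\geq H$, $ER^2=O(t^{2H+2})$, $|E[MR]|=O(t^{2H+1})$). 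As to what each approach buys: the paper's argument is shorter modulo the cited formula, but that kernel, with its factor $2H-1$ and singularity $|1-e^{-x/H}|^{2H-2}$, is locally integrable at the origin only when $H>\frac12$, so the paper's proof as written really covers only that regime; your argument is self-contained, valid for every $H\in(0,1)$ in a single stroke, uniform in $s\in[0,T]$ (so it does not even need to presuppose stationarity of increments, which the paper's definition of the variogram tacitly uses), and it delivers the sharper remainder $O(t^{2H+1})$ rather than a bare $o(t^{2H})$.
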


\begin{proof}
Proposition 12 of \cite{KS} yields
\begin{eqnarray}  \label{eq: variogram function}
\mathrm{I\kern-0.16em E}\left(Y^{(1)}_{t}-Y_{s}^{(1)}\right)^{2} &=&
2\int_{0}^{t-s}(t-s-x)k_{H}(x)dx, 
\end{eqnarray}
where
\begin{eqnarray}
k_{H}(x) &=& H(2H-1)H^{2H-2}e^{-\frac{(1-H)}{H}x}\left|1-e^{-x/H}%
\right|^{2H-2} .  \notag
\end{eqnarray}
Thus
\begin{eqnarray}
\upsilon_{Y^{(1)}}(t) &=&\frac{1}{2}\mathrm{I\kern-0.16em E}%
\left(Y^{(1)}_{t+s}-Y_{s}^{(1)}\right)^{2}  \notag \\
&=& \int_{0}^{t}(t-x)k_{H}(x)dx  \notag \\
&=&H(2H-1)H^{2H-2} \int_{0}^{t}(t-x)e^{-\frac{(1-H)}{H}x}\left|1-e^{-x/H}%
\right|^{2H-2}dx  \notag \\
&=&H(2H-1)H^{2H-2} \int_{0}^{t}ue^{-\frac{(1-H)}{H}(t-u)}\left|1-e^{-\frac{1%
}{H}((t-u)}\right|^{2H-2}dx  \notag \\
&=&H(2H-1)H^{2H-2} \int_{e^{-t/H}}^{1}(t+\log v^H)v^{1-H}(1-v)^{2H-2}\frac{H%
}{v}dv  \notag \\
&=&H^{2}(2H-1)H^{2H-2}
\left(t\int_{e^{-t/H}}^{1}v^{-H}(1-v)^{2H-2}dv+\int_{e^{-t/H}}^{1}\log
(v^H)v^{-H}(1-v)^{2H-2}dv\right).  \notag
\end{eqnarray}
Write
\begin{eqnarray}
t\int_{e^{-t/H}}^{1}v^{-H}(1-v)^{2H-2}dv &=& t^{2H}\frac{%
\int_{e^{-t/H}}^{1}v^{-H}(1-v)^{2H-2}dv}{t^{2H-1}}  \notag \\
&=:&t^{2H}a_{t}.  \notag
\end{eqnarray}
By L'Hôpital Theorem
\begin{eqnarray}
\lim_{t\rightarrow 0^+}a_{t} &=& \lim_{t\rightarrow 0^+} \frac{1/H
e^{-t/H}e^{t}(1-e^{-t/H})^{2H-2}}{(2H-1)t^{2H-2}}  \notag \\
&=& \lim_{t\rightarrow 0^+} \frac{1}{H(2H-1)}\left(\frac{e^{t/2H}-e^{-t/2H}}{%
t}\right)^{2H-2}  \notag \\
&=&\frac{1}{H(2H-1)H^{2H-2}}.  \notag
\end{eqnarray}
Similarly, writing
\begin{eqnarray}
\int_{e^{-t/H}}^{1}\log v^Hv^{-H}(1-v)^{2H-2}dv &=& t^{2H}\frac{%
\int_{e^{-t/H}}^{1}\log v^Hv^{-H}(1-v)^{2H-2}dv}{t^{2H}}  \notag \\
&=:& t^{2H}b_{t},  \notag
\end{eqnarray}
we obtain by L'Hôpital Theorem that
\begin{eqnarray}
\lim_{t\rightarrow 0^+}b_{t} &=& \lim_{t\rightarrow 0^+} \frac{1/H
e^{-t/H}(-t)e^{t}(1-e^{-t/H})^{2H-2}}{2Ht^{2H-1}}  \notag \\
&=& \lim_{t\rightarrow 0^+} -\frac{e^{-t/H}e^{t}(1-e^{-t/H})^{2H-2}}{%
H(2H)t^{2H-2}}  \notag \\
&=&\frac{-1}{H(2H)H^{2H-2}}.  \notag
\end{eqnarray}
Thus, as $t\rightarrow 0^+$,
\begin{eqnarray*}
a_{t}+b_{t} &\rightarrow& \frac{1}{2H^{2}(2H-1)H^{2H-2}}.
\end{eqnarray*}
As 
$$
\upsilon_{Y^{(1)}}(t) = H^{2}(2H-1)H^{2H-2}t^{2H}(a_{t}+b_{t}),
$$
we get
$$
\frac{\upsilon_{Y^{(1)}}(t)}{t^{2H}} \rightarrow \frac12.
$$
Equivalently, this means
$$
\frac{\upsilon_{Y^{(1)}}(t)}{t^{2H}} - \frac12 = o(1),
$$
as $t\rightarrow 0^+$ which finishes the proof. 
\end{proof}

\begin{proof}[Proof of Theorem \ref{theorem:Consistency}]
For $t\in [0,T]$ and an integer $n$, we denote by $[tn]$ the largest integer that is at most $tn$. Let now $m\geq n$. We have
\begin{eqnarray}
&&m^{-1+2H}V_{n}(Z)_{t}-\int_{0}^{t}|u_{s}|^2ds  \notag \\
&=&m^{2H-1}\sum_{j=1}^{[mt]}\left(\left|%
\int_{(j-1)/m}^{j/m}u_{s}dY^{(1)}_{s}\right|^{2}-\left|u_{\frac{j-1}{m}%
}(Y^{(1)}_{\frac{j}{m}}-Y^{(1)}_{\frac{j-1}{m}})\right|^2\right)  \notag \\
&+&m^{2H-1}\left(\sum_{j=1}^{[mt]}\left|u_{\frac{j-1}{m}}\left(Y^{(1)}_{%
\frac{j}{m}}-Y^{(1)}_{\frac{j-1}{m}}\right)\right|^2-\sum_{i=1}^{[nt]}%
\left|u_{\frac{i-1}{n}}\right|^2\sum_{j\in I_{n}(i)}\left|Y^{(1)}_{\frac{j}{m%
}}-Y^{(1)}_{\frac{j-1}{m}}\right|^2\right)  \notag \\
&+&m^{2H-1}\sum_{i=1}^{[nt]}\left|u_{\frac{i-1}{n}}\right|^2\sum_{j\in
I_{n}(i)}\left|Y^{(1)}_{\frac{j}{m}}-Y^{(1)}_{\frac{j-1}{m}%
}\right|^2-n^{-1}\sum_{i=1}^{[nt]}\left|u_{\frac{i-1}{n}}\right|^2  \notag \\
&+&\left(n^{-1}\sum_{i=1}^{[nt]}\left|u_{\frac{i-1}{n}}\right|^2-%
\int_{0}^{t}|u_{s}|^2ds\right)  \notag \\
&=& A_{t}^{(m)}+B_{t}^{(n,m)}+C_{t}^{(n,m)}+D_{t}^{(n)},  \notag
\end{eqnarray}
where
\begin{eqnarray}
I_{n}(i)&=&\left\{j:\frac{j}{m}\in\left(\frac{i-1}{n},\frac{i}{n}\right], \ \ 1\leqslant i\leqslant [nt]. \right\} \notag
\end{eqnarray}
The idea of the proof is that we first let $m\rightarrow \infty$ and then $n\rightarrow \infty$, and we show that each of the terms $A_t^{(m)}, B_t^{(n,m)}, C_t^{(n,m)}$, and $D_t^{(n)}$ converges to zero almost surely, and uniformly in $t$.

Let us begin with the term $C_t^{(n,m)}$. We have 
\begin{eqnarray}
\parallel C^{(n,m)}\parallel_{\infty}
&\leqslant&\sum_{i=1}^{[nT]}\left|u_{\frac{i-1}{n}}\right|^{2}\left|m^{2H-1}%
\sum_{j\in I_{n}(i)}\left| Y^{(1)}_{\frac{j}{m}}-Y^{(1)}_{\frac{j-1}{m}%
}\right|^2-n^{-1}\right|.  \notag
\end{eqnarray}
As we first let $m\rightarrow \infty$, it suffices to show that, for a fixed $n$, we have 
\begin{eqnarray}
\left|m^{2H-1}\sum_{j\in I_{n}(i)}\left| Y^{(1)}_{\frac{j}{m}}-Y^{(1)}_{%
\frac{j-1}{m}}\right|^2-n^{-1}\right| \rightarrow 0.  \notag
\end{eqnarray}
For this we apply Lemma \ref{lemma:lemma of Lauri} and Theorem \ref{theorem:BE_bound_QV}. First note that it follows from (\ref{eq: variogram function}) that
\begin{eqnarray}
\mathrm{I\kern-0.16em E}\left(Y^{(1)}_{\frac{j}{m}}-Y^{(1)}_{\frac{j-1}{m}}\right)^2 &=&
2\int_{0}^{1/m}(1/m-x)k_{H}(x)dx  \notag \\
&=& \mathrm{I\kern-0.16em E}\left(Y^{(1)}_{\frac{1}{m}}-Y^{(1)}_{0}\right)^2.  \notag
\end{eqnarray}
Combining this with Lemma \ref{lemma:variogram function}, we get
\begin{eqnarray*}
m^{-1+2H}\sum_{j\in I_{n}(i)} \mathrm{I\kern-0.16em E} \left[%
\left(Y^{(1)}_{\frac{j}{m}}-Y^{(1)}_{\frac{j-1}{m}}\right)^2\right] &=&\mathrm{I\kern-0.16em
E}\left(Y^{(1)}_{\frac{1}{m}}-Y^{(1)}_{0}\right)^2 m^{-1+2H}\sum_{j\in I_{n}(i)}1  \notag \\
& = & m^{-2H}m^{-1+2H}\frac{m}{n} + o(1)\rightarrow \frac{1}{n}
\end{eqnarray*}
as $m\longrightarrow \infty$. On the other hand, the covariance of the increments is given by 
(see \cite{KS})
\begin{eqnarray}
\mathrm{I\kern-0.16em E}%
\left((Y^{(1)}_{t}-Y^{(1)}_{s})(Y^{(1)}_{v}-Y^{(1)}_{u})\right)&=&%
\int_{s}^{t}\int_{u}^{v}r_{H}(w,z)dwdz,  \notag
\end{eqnarray}
where $r_{H}(u,v)$ is a symmetric kernel given by
\begin{eqnarray}
r_{H}(u,v)&=&H(2H-1)H^{2(H-1)} \frac{e^{-(1-H)(u-v)/H}}{\left|1-e^{-(u-v)/H}%
\right|^{2(1-H)}}.  \notag
\end{eqnarray}
Now it is straightforward to check that the assumption \eqref{eq:derivative_cond_needed} is satisfied. Thus, thanks to Theorem \ref{theorem:BE_bound_QV}, $\parallel C_t^{(n,m)}\parallel_\infty \rightarrow 0$ almost surely.

Consider next the term $A_t^{(m)}$. We have  
\begin{eqnarray}
|A_{t}^{(m)}|&\leqslant&m^{2H-1}\sum_{j=1}^{[mt]}\left|\left|%
\int_{(j-1)/m}^{j/m}u_{s}dY^{(1)}_{s}\right|^{2}-\left|u_{\frac{j-1}{m}%
}(Y^{(1)}_{\frac{j}{m}}- Y^{(1)}_{\frac{j-1}{m}})\right|^2\right|.  \notag
\end{eqnarray}
We will use the following inequality (see \cite{HNZ}), valid for any $p\geq 0$ and any $x,y\in \mathbb{R}$,
\begin{eqnarray}  \label{eq:inequality}
\left||x|^p-|y|^p\right| &\leqslant& (p\vee 1)2^{(p-2)^{+}}\left[%
|x-y|^p+|y|^{(p-1)^{+}}|x-y|^{(p\wedge1)}\right].
\end{eqnarray}
This implies
\begin{eqnarray}
|A_{t}^{(m)}|
&\leqslant&2m^{-1+2H}\sum_{j=1}^{[mt]}\left|%
\int_{(j-1)/m}^{j/m}u_{s}dY^{(1)}_{s}-u_{\frac{j-1}{m}%
}(Y^{(1)}_{\frac{j}{m}}-Y^{(1)}_{\frac{j-1}{m}})\right|^2  \notag \\
&+&2m^{-1+pH}\sum_{j=1}^{[mt]}\left|u_{\frac{j-1}{m}}\left(Y^{(1)}_{\frac{j}{m}}-
Y^{(1)}_{\frac{j-1}{m}}\right)\right|\left|%
\int_{(j-1)/m}^{j/m}u_{s}dY^{(1)}_{s}-u_{\frac{j-1}{m}%
}\left(Y^{(1)}_{\frac{j}{m}}-Y^{(1)}_{\frac{j-1}{m}}\right)\right|  \notag \\
&=:&E_{(m)}(t)+R_{(m)}(t),  \notag
\end{eqnarray}
where
\begin{eqnarray}
E_{(m)}(t)&=&2m^{-1+2H}\sum_{j=1}^{[mt]}\left|%
\int_{(j-1)/m}^{j/m}u_{s}dY^{(1)}_{s}-u_{\frac{j-1}{m}}\left(Y^{(1)}_{\frac{j}{m}}-
Y^{(1)}_{\frac{j-1}{m}}\right)\right|^2,  \notag \\
R_{(m)}(t)&=&2m^{-1+2H}\sum_{j=1}^{[mt]}\left|u_{\frac{j-1}{m}%
}\left(Y^{(1)}_{\frac{j}{m}}- Y^{(1)}_{\frac{j-1}{m}}\right)\right|\left|%
\int_{(j-1)/m}^{j/m}u_{s}dY^{(1)}_{s}-u_{\frac{j-1}{m}%
}\left(Y^{(1)}_{\frac{j}{m}}-Y^{(1)}_{\frac{j-1}{m}}\right)\right|.  \notag
\end{eqnarray}
For the term $E_{(m)}(t)$ we observe, by applying Young inequality \eqref{Young inequality}, that 
\begin{eqnarray}
|E_{(m)}(t)| &\leqslant&c_{H,\beta,\varepsilon} m^{2H-1}
\sum_{j=1}^{[mT]}\left|var_{\frac{1}{\beta}}(u;\mathcal{I}_{m}(j))var_{1/(H-%
\varepsilon)}(Y^{(1)};\mathcal{I}_{m}(j))\right|^2,  \notag
\end{eqnarray}
where $\mathcal{I}_{m}(j)=\left(\frac{j-1}{m},\frac{j}{m}\right]$%
and $0<\varepsilon<H$.\newline
By \eqref{eq:holder_var} we have
\begin{eqnarray}
var_{\frac{1}{\beta}}(u,\mathcal{I}_{m}(j)) &\leqslant& m^{-\beta}\|u\|_{\beta}  \notag
\end{eqnarray}
and
\begin{eqnarray}
var_{1/(H-\varepsilon)}(Y^{(1)},\mathcal{I}_{m}(j)) &\leqslant&
m^{-(H-\varepsilon)}\|Y^{(1)}\|_{H-\varepsilon}.  \notag
\end{eqnarray}
Thus
\begin{eqnarray}
\|E_{(m)}\|_{\infty} &\leqslant&c_{H,\beta,\varepsilon}m^{2H-1-2\beta} \|u\|^2_{\beta}
\sum_{j=1}^{[mT]}\left|var_{1/(H-\varepsilon)}(Y^{(1)};\mathcal{I}%
_{m}(j))\right|^2,  \notag \\
&\leqslant&Tc_{H,\beta,\varepsilon}m^{2H-1-2\beta-2(H-\varepsilon)+1} \|u\|^2_{\beta}\|Y^{(1)}\|^2_{(H-\varepsilon)}\notag \\
&\leqslant&Tc_{H,\beta,\varepsilon}m^{2(\varepsilon-\beta)}
\|u\|^2_{\beta}\|Y^{(1)}\|^2_{(H-\varepsilon)}.  \notag
\end{eqnarray}
As we can choose $\varepsilon< \beta$, this implies that $\lim_{m\rightarrow \infty}\|E_{(m)}\|_{\infty}=0$ almost surely. Similarly, 
we can apply \eqref{Young inequality} to the term $R_{(m)}(t)$ to get
\begin{eqnarray}
|R_{(m)}(t)|&\leqslant&c_{H,\beta,\varepsilon}m^{-1+2H}\sum_{j=1}^{[mT]}\left|u_{\frac{%
j-1}{m}}(Y_{\frac{j}{m}}^{(1)}-Y_{\frac{j-1}{m}}^{(1)})\right|%
\left|var_{\frac{1}{\beta}}(u,\mathcal{I}_{m}(j))var_{1/(H-\varepsilon)}(Y^{(1)},\mathcal{I%
}_{m}(j))\right|  \notag \\
&\leqslant&2c_{H,\beta,\varepsilon}m^{-1+2H-\beta-(H-\varepsilon)}\parallel
u\parallel_{\beta}\parallel
Y^{(1)}\parallel_{H-\varepsilon}\sum_{j=1}^{[mT]}\left|u_{\frac{j-1}{m}}(Y_{%
\frac{j}{m}}^{(1)}-Y_{\frac{j-1}{m}}^{(1)})\right|  \notag \\
&\leqslant&2c_{H,\beta,\varepsilon}m^{-1+H-\beta+\varepsilon}\parallel
u\parallel_{\beta}\parallel
Y^{(1)}\parallel_{H-\varepsilon}\parallel
u\parallel_{\infty}\sum_{j=1}^{[mT]}\left|var_{1/(H-\varepsilon)}(Y^{(1)},%
\mathcal{I}_{m}(j))\right|  \notag \\
&\leqslant&Tc_{H,\beta,\varepsilon}\parallel u\parallel_{\beta}\parallel
Y^{(1)}\parallel^2_{H-\varepsilon}\parallel
u\parallel_{\infty}m^{-\beta+2\varepsilon}.  \notag
\end{eqnarray}
Hence, for $\varepsilon < \frac{\beta}{2}$, we get $\parallel R_{(m)}\parallel_{\infty} \rightarrow 0$ almost surely, and consequently, $\parallel A^{(m)}\parallel_{\infty} \rightarrow 0$ almost surely as $m\rightarrow\infty$.

It remains to study the terms $D^{(n)}_t$ and $B^{(n,m)}_t$. For the term $D^{(n)}_t$ we first observe that for any $s\in \left[\frac{i-1}{n},\frac{i}{n}\right]$, we have
$$
||u_{\frac{i-1}{n}}|^2-|u_s|^2| \leq 2\|u\|_{\infty}\|u\|_{\beta} n^{-\beta}.
$$
Thus we can estimate
\begin{eqnarray}
|D_{t}^{(n)}|&=&\left|n^{-1}\sum_{i=1}^{[nt]}|u_{\frac{i-1}{n}}|^2-%
\int_{0}^{t}|u_s|^2ds\right|  \notag \\
&=&\left|\sum_{i=1}^{[nt]}\int_{(i-1)/n}^{i/n}(|u_{\frac{i-1}{n}}|^2-|u_s|^2)ds + \int_{[nt]/n}^t|u_s|^2ds \right|  \notag \\
&\leqslant&\sum_{i=1}^{[nt]}\int_{(i-1)/n}^{i/n}\left||u_{\frac{i-1}{n}}|^2-|u_{s}|^2\right|ds + \int_{[nt]/n}^t|u_s|^2ds \notag\\
&\leqslant&2T\|u\|_{\infty}\|u\|_{\beta} n^{-\beta} + \|u\|_{\infty}|t-[nt]/n|\notag\\
&\leqslant&2T\|u\|_{\infty}\|u\|_{\beta} n^{-\beta} + \|u\|_{\infty}n^{-1}.\notag
\end{eqnarray}
This implies that also $\parallel D^{(n)}\parallel_{\infty}\rightarrow 0$ almost surely as $n\rightarrow 0$. It remains to study the term $B_{t}^{(n,m)}$. First note that, by the definition of $I_n(i)$, we have
$$
\sum_{j=1}^{[mt]}\left|u_{\frac{j-1}{m}}\left(Y^{(1)}_{%
\frac{j}{m}}-Y^{(1)}_{\frac{j-1}{m}}\right)\right|^2 = \sum_{i=1}^{[nt]}\sum_{j\in I_n(i)}\left|u_{\frac{j-1}{m}}\left(Y^{(1)}_{%
\frac{j}{m}}-Y^{(1)}_{\frac{j-1}{m}}\right)\right|^2.
$$
Together with the fact that 
$$
|u_{\frac{j-1}{m}} - u_{\frac{i-1}{n}}|^2 \leq 4\parallel u\parallel_{\infty}\parallel u\parallel_\beta n^{-\beta}
$$
as $\frac{j}{m} \in \left(\frac{i-1}{n},\frac{i}{n}\right]$, 
this gives us
\begin{eqnarray}
|B_{t}^{(n,m)}|&=&\left|m^{2H-1}\left(\sum_{j=1}^{[mt]}\left|u_{\frac{j-1}{m}}\left(Y^{(1)}_{%
\frac{j}{m}}-Y^{(1)}_{\frac{j-1}{m}}\right)\right|^2-\sum_{i=1}^{[nt]}%
\left|u_{\frac{i-1}{n}}\right|^2\sum_{j\in I_{n}(i)}\left|Y^{(1)}_{\frac{j}{m%
}}-Y^{(1)}_{\frac{j-1}{m}}\right|^2\right) \right|  \notag \\
&\leq &\sum_{i=1}^{[nt]}\sum_{j\in I_n(i)}|u_{\frac{j-1}{m}} - u_{\frac{i-1}{n}}|^2\left|Y^{(1)}_{%
\frac{j}{m}}-Y^{(1)}_{\frac{j-1}{m}}\right|^2 \notag\\
&\leq & 4\parallel u\parallel_{\infty}\parallel u\parallel_\beta n^{-\beta}\sum_{i=1}^{[nt]}\sum_{j\in I_n(i)}\left|Y^{(1)}_{%
\frac{j}{m}}-Y^{(1)}_{\frac{j-1}{m}}\right|^2. \notag
\end{eqnarray}
Here
$$
\sum_{j\in I_n(i)}\left|Y^{(1)}_{%
\frac{j}{m}}-Y^{(1)}_{\frac{j-1}{m}}\right|^2 \rightarrow n^{-1}
$$
almost surely, and thus
$$
\sum_{i=1}^{[nt]}\sum_{j\in I_n(i)}\left|Y^{(1)}_{%
\frac{j}{m}}-Y^{(1)}_{\frac{j-1}{m}}\right|^2 \rightarrow t
$$
almost surely. This implies that $\parallel B^{(n,m)}\parallel_{\infty}\rightarrow 0$ which completes the proof.
\end{proof}

\begin{theorem}
\label{theorem:distribution asymptotic}Suppose that $u=\{u_{t},t\in[0,T]\}$ is an
H\"older continuous stochastic process of order $\beta$ with $\beta > \max\left(1-H,\frac12\right)$, and measurable with respect to $\mathcal{F}_T^{H}$, where $\mathcal{F}_T^H$ is the filtration generated by $B^H$, or equivalently, by $Y^{(1)}$. Set
\begin{eqnarray}
Z_{t} &=& \int_{0}^{t}u_{s}dY_{s}^{(1)}.
\end{eqnarray}
Then, as $n$ tends to infinity,
\begin{eqnarray*}
n^{2H-1/2}V_{n}(Z)_{t}-\sqrt{n}\int_{0}^{t}|u_{s}|^2ds%
 &\overset{\mathcal{L}}{\rightarrow}&
\int_{0}^{t}|u_{s}|^2dW_{s}
\end{eqnarray*}
$\mathcal{F}_T^H$-stably in the space $\mathcal{D}([0,T]^2)$, where $W=\{W_{t},t\in[0,T]\}$ is a Brownian motion
independent of $\mathcal{F}_{T}^H$.
\end{theorem}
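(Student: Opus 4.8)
The plan is to derive the statement from the stable convergence theorem of Theorem~\ref{theorem: Hypotheses}, applied with the H\"older weight $f(s)=|u_s|^2$ and a double array $\xi_{i,n}$ built from the renormalized, centered squared increments of $Y^{(1)}$. Writing $G_i=Y^{(1)}_{i/n}-Y^{(1)}_{(i-1)/n}$ (the dependence on $n$ is suppressed), I would set
$$
\xi_{i,n}=\frac{1}{\sigma}\,n^{2H-\frac12}\left(G_i^2-E\,G_i^2\right),
$$
where $\sigma^2$ is the limiting variance constant identified below. The map $f=|u|^2$ is H\"older of order $\beta>\tfrac12$, since $\big||u_t|^2-|u_s|^2\big|\le 2\|u\|_\infty\|u\|_\beta|t-s|^\beta$, and it is $\mathcal{F}_T^H$-measurable by hypothesis; these are exactly the two requirements on $f$ in Theorem~\ref{theorem: Hypotheses}, which is what forces the extra assumptions $\beta>\tfrac12$ and $\mathcal{F}_T^H$-measurability here. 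Granting hypotheses (H1) and (H2) for this array, Theorem~\ref{theorem: Hypotheses} yields the $\mathcal{F}_T^H$-stable convergence $\sum_{i=1}^{[nt]}|u_{i/n}|^2\xi_{i,n}\to\int_0^t|u_s|^2\,dB_s$, with $B$ a standard Brownian motion independent of $\mathcal{F}_T^H$; setting $W=\sigma B$ gives the Brownian motion of the statement.

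First I would carry out the reduction. The goal is to show
$$
n^{2H-\frac12}V_n(Z)_t-\sqrt{n}\int_0^t|u_s|^2\,ds=\sigma\sum_{i=1}^{[nt]}|u_{i/n}|^2\xi_{i,n}+\mathcal{R}_n(t),
$$
with $\|\mathcal{R}_n\|_\infty\to0$ almost surely, and then to transfer the stable convergence through this uniformly vanishing remainder. The remainder collects the scale-$n^{2H-1/2}$ analogues of the terms $E_{(m)},R_{(m)}$ (replacing $\int_{(i-1)/n}^{i/n}u_s\,dY^{(1)}_s$ by $u_{(i-1)/n}G_i$) and $D^{(n)}$ (replacing $\sqrt{n}\int_0^t|u|^2ds$ by $n^{-1/2}\sum|u_{(i-1)/n}|^2$) from the proof of Theorem~\ref{theorem:Consistency}, together with the negligible cost of evaluating $u$ at $i/n$ rather than $(i-1)/n$. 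The Young-type bounds established there now carry an extra factor $n^{1/2}$, turning the decisive exponents into $n^{1/2+2\varepsilon-2\beta}$, $n^{1/2-\beta+2\varepsilon}$ and $n^{1/2-\beta}$, all of which tend to $0$ precisely because $\beta>\tfrac12$ (after choosing $\varepsilon$ small); this is where the sharper assumption $\beta>\tfrac12$ enters. The one genuinely new point is the centering: one must verify
$$
n^{-1/2}\sum_{i=1}^{[nt]}|u_{(i-1)/n}|^2\left(n^{2H}E\,G_i^2-1\right)\longrightarrow0,
$$
which forces the bias $n^{2H}E\,G_i^2-1$ to be $o(n^{-1/2})$. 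I would obtain this from a quantitative sharpening of Lemma~\ref{lemma:variogram function} to $\upsilon_{Y^{(1)}}(t)=\tfrac{t^{2H}}{2}+o(t^{2H+1/2})$, by tracking a rate in the L'H\^opital asymptotics of $a_t+b_t$ in that lemma's proof.

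Next, (H2) is a fourth-moment tightness bound on $\sum_{i=j+1}^k\xi_{i,n}$. Each $\xi_{i,n}$ lies in the second Wiener chaos, so by hypercontractivity $E\big|\sum_{i=j+1}^k\xi_{i,n}\big|^4\le C\big(\mathrm{Var}\big(\sum_{i=j+1}^k\xi_{i,n}\big)\big)^2$, and the variance is controlled through $\mathrm{Var}\,G_i^2=2(E\,G_i^2)^2$ and the covariances $E[G_iG_{i'}]$. Using the explicit kernel $r_H$ recorded in the proof of Theorem~\ref{theorem:Consistency}, one checks that the stationary correlations $\rho^{(n)}(\ell)=E[G_iG_{i+\ell}]/E\,G_i^2$ converge as $n\to\infty$ to the fractional-Brownian correlations $\rho_H(\ell)$, with $|\rho_H(\ell)|\le C|\ell|^{2H-2}$, and hence satisfy a uniform bound $\sum_\ell\rho^{(n)}(\ell)^2\le C$ exactly when $H<\tfrac34$. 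This gives $\mathrm{Var}\big(\sum_{i=j+1}^k\xi_{i,n}\big)\le C\,\frac{k-j}{n}$ and therefore the required $E\big|\sum_{i=j+1}^k\xi_{i,n}\big|^4\le C\big(\frac{k-j}{n}\big)^2$.

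The main obstacle is (H1): the $\mathcal{F}_T^H$-stable functional central limit theorem for $g_n(t)=\sum_{i=1}^{[nt]}\xi_{i,n}$, which must converge stably to a standard Brownian motion independent of $\mathcal{F}_T^H$. I would argue in three steps. (i) Identify $\mathrm{Var}\,g_n(t)\to t$, i.e. pin down $\sigma^2=2\sum_{\ell\in\mathbb{Z}}\rho_H(\ell)^2$, using $\rho^{(n)}(\ell)\to\rho_H(\ell)$ and the uniform $\ell^2$-bound; the division by $\sigma$ then normalizes the limit. (ii) Prove asymptotic Gaussianity of the finite-dimensional distributions: since $g_n(t)$ is a second-chaos sequence, by the fourth-moment theorem it suffices that the fourth cumulant of each increment vanishes, which again reduces to the $\ell^2$-summability $\sum_\ell\rho_H(\ell)^2<\infty$, i.e. $H<\tfrac34$. (iii) Establish stability together with independence of the limit from $\mathcal{F}_T^H$: I would show joint convergence of $g_n$ with an arbitrary finite family of elements generating $\mathcal{F}_T^H$ and that the mixed covariances vanish, so the Gaussian limit is asymptotically independent of $\mathcal{F}_T^H$. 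This last step is the delicate one, and it is exactly where the $H<\tfrac34$ restriction is essential; I would handle it with Malliavin-calculus (contraction) estimates, or equivalently by comparing the renormalized increments of $Y^{(1)}$ with those of fractional Brownian motion and invoking the corresponding Breuer--Major stable limit theorem. With (H1) and (H2) in hand, Theorem~\ref{theorem: Hypotheses} delivers the weighted stable convergence, and combining it with the vanishing remainder $\mathcal{R}_n$ completes the argument.
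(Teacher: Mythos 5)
Your proposal follows the same architecture as the paper's proof: decompose $n^{2H-1/2}V_n(Z)_t-\sqrt{n}\int_0^t|u_s|^2ds$ into the weighted sum $\sum_{i=1}^{[nt]}|u_{i/n}|^2\xi_{i,n}$ plus remainder terms that vanish almost surely because $\beta>\frac12$ (your exponents $n^{1/2+2\varepsilon-2\beta}$, $n^{1/2-\beta+2\varepsilon}$, $n^{1/2-\beta}$ match what one gets by rerunning the estimates of Theorem~\ref{theorem:Consistency} with the extra factor $\sqrt{n}$), and then apply Theorem~\ref{theorem: Hypotheses} to the weighted sum. Where you genuinely part ways is in verifying the hypotheses: the paper disposes of (H1) in one line, by writing $Y^{(1)}_t=\int_H^{a_t}e^{-a_s^{-1}}dB^H_s$ and citing the stable limit theorem of \cite{LL} for quadratic functionals of integrals with respect to fBm, and of (H2) by citing Proposition 4.2 of \cite{Taqqu}; you instead rebuild both from scratch (hypercontractivity in the second chaos for (H2); variance identification, the fourth-moment theorem, and a Malliavin/contraction argument for stability and asymptotic independence for (H1)). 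Your route is much longer, but it buys two things the paper silently skips. First, the centering: the paper's $\xi_{i,n}$ is centred at $n^{-1/2}$, not at its true mean $n^{2H-1/2}\mathrm{E}(Y^{(1)}_{i/n}-Y^{(1)}_{(i-1)/n})^2$, so one must check that the bias is $o(n^{-1/2})$ after summing $[nt]$ terms and multiplying by $n^{-1/2}\cdot n$; Lemma~\ref{lemma:variogram function} as stated (error $o(t^{2H})$) is too weak for this, and your sharpened version $\upsilon_{Y^{(1)}}(t)=\frac{t^{2H}}{2}+o(t^{2H+1/2})$ (in fact $O(t^{2H+1})$ holds by tracking the rate in $a_t+b_t$) is genuinely needed — the paper never addresses this point. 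Second, the normalization: Theorem~\ref{theorem: Hypotheses} produces a \emph{standard} Brownian motion, so without your constant $\sigma^2=2\sum_{\ell}\rho_H(\ell)^2$ the limit cannot be written as $\int_0^t|u_s|^2dW_s$ with $W$ standard; the paper leaves this constant untracked.

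One discrepancy deserves emphasis: you impose $H<\frac34$, while the theorem as stated carries no restriction on $H$ (and the paper's Discussion even claims all of $(0,1)$ is covered because $Y^{(1)}$ has short memory). Here you are right and the stated theorem is overclaimed. The short memory of $Y^{(1)}$ is a macroscopic-lag phenomenon: on $[0,T]$ with mesh $1/n$, the kernel $r_H$ behaves like $H(2H-1)|u-v|^{2H-2}$ at small lags, so the normalized increment correlations satisfy $\rho^{(n)}(\ell)\approx\rho_H(\ell)\sim\ell^{2H-2}$ for $1\ll\ell\ll n$, exactly as for fBm; hence $\sum_{\ell\leq nT}\rho^{(n)}(\ell)^2$ diverges for $H\geq\frac34$, the variance of $g_n(t)=\sum_{i\leq[nt]}\xi_{i,n}$ blows up, and both (H1) and (H2) fail. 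The paper's own ingredients concede this implicitly — the finiteness of $\sum_i\rho^2_{Y^{(1)}}(i)$ invoked for (H2) and the hypotheses of \cite{LL} both require $H<\frac34$, and the Introduction states the weak convergence result only for $H\in\left(0,\frac34\right)$. So your proof is a correct, self-contained execution of the same strategy, proving the statement in the regime where it is actually true and making explicit two hypotheses (the bias estimate and $H<\frac34$) that the published argument leaves hidden.
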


\begin{proof}
As in the proof of Theorem \ref{theorem:Consistency}, we make a decomposition 
\begin{eqnarray}
n^{2H-1/2}V_{n}(Z)_{t}-\sqrt{n}\int_{0}^{t}|u_{s}|^2 ds &=:&
A_{t}^{(n)}+B_{t}^{(n)}+C_{t}^{(n)},  \notag
\end{eqnarray}
where
\begin{eqnarray*}
A_{t}^{(n)}
&=&n^{2H-1/2}\sum_{i=1}^{[nt]}\left(\left|%
\int_{(i-1)/n}^{i/n}u_{s}dY^{(1)}_{s}\right|^{2}-\left|u_{\frac{i}{n}%
}(Y^{(1)}_{\frac{i}{n}}-Y^{(1)}_{\frac{i-1}{n}})\right|^2\right),  \notag \\
\notag \\
B_{t}^{(n)}&=&n^{2H-1/2}\sum_{i=1}^{[nt]}\left|u_{\frac{i}{n}}\left(Y^{(1)}_{%
\frac{i}{n}}-Y^{(1)}_{\frac{i-1}{n}}\right)\right|^2-\frac{1}{\sqrt{n}}%
\sum_{i=1}^{[nt]}\left|u_{\frac{i}{n}}\right|^2,  \notag \\
C_{t}^{(n)}&=&\frac{1}{\sqrt{n}}\sum_{i=1}^{[nt]}\left|u_{\frac{i}{n}%
}\right|^2-\sqrt{n}\int_{0}^{t}|u_{s}|^2ds.  \notag
\end{eqnarray*}
Using $\beta > \frac12$ and treating the terms $A_t^{(n)}$ and $C_t^{(n)}$ as the terms $A_t^{(m)}$ and $D_t^{(n)}$ in the proof of Theorem \ref{theorem:Consistency}, we obtain 
$$
\parallel A^{(n)}\parallel_\infty +\parallel C^{(n)}\parallel_\infty \rightarrow 0
$$
almost surely. Consider next the term $B_{t}^{(n)}$.
We set
\begin{eqnarray*}
\xi_{i,n} &=& n^{2H-1/2}\left|Y^{(1)}_{\frac{i}{n}} -Y^{(1)}_{\frac{(i-1)}{n}%
}\right|^2-\frac{1}{\sqrt{n}}  \notag
\end{eqnarray*}
so that 
$$
B_{t}^{(n)}= \sum_{i=1}^{[nt]}|u_{i/n}|^{2}\xi_{i,n}.
$$ 
In order to complete the proof, we need to verify hypotheses (H1) and (H2) of Theorem \ref{theorem: Hypotheses}. However, the hypothesis (H1) follows from the general convergence result established in \cite{LL}. Indeed, the process $Y^{(1)}$ can be written as an integral with respect to the fBm as 
\begin{eqnarray}
Y_{t}^{(1)} &=& \int_{H}^{a_t}e^{-a_s^{-1}}dB^{H}_{s}  \notag
\end{eqnarray}
which follows easily from a change of variable. For hypothesis (H2), we obtain by Proposition 4.2 in \cite{Taqqu} together with direct computations that, for any $1\leqslant j <k\leqslant [nT]$, we have
\begin{eqnarray}
\mathrm{I\kern-0.16em E}\left(\left|\sum_{i=j+1}^{k}\xi_{i,n}\right|^4%
\right) &=&\frac{1}{n^2} \mathrm{I\kern-0.16em E}\left(\left|%
\sum_{i=j+1}^{k}n^{2H}\left|Y^{(1)}_{\frac{i}{n}} -Y^{(1)}_{\frac{(i-1)}{n}%
}\right|^2-1\right|^4\right)  \notag \\
&=&\frac{1}{n^2} \mathrm{I\kern-0.16em E}\left(\left|\sum_{i=j+1}^{k}G(%
\Delta Y_{i}^{(1)})\right|^4\right)  \notag \\
&\leqslant&\frac{(k-j)^2}{n^2}\left(\sum_{i=0}^{\infty}\rho_{Y^{(1)}}^2(i)%
\right)^2  \notag \\
&\leqslant&C\left(\frac{k-j}{n}\right)^2  \notag
\end{eqnarray}
\end{proof}

\section{Application to the estimation of the integrated volatility}
\label{sec:vol}
In this section we apply our main results to the estimation of the integrated volatility $%
\int_{0}^{t}|\sigma_s|^{2}ds$. We consider a generalized fractional Ornstein-Uhlenbeck process of the second kind defined as the solution to the stochastic differential equation
\begin{eqnarray}  \label{eq:SDE}
dX_{t} &=& -\theta X_{t}dt+\sigma_{t}dY^{(1)}_{t} ,
\end{eqnarray}
with some initial condition $X_0 \in \mathbb{R}$. We define the estimator $QV_{n}(X)_{t}$ for the integrated volatility $%
\int_{0}^{t}|\sigma_s|^{2}ds$ as 
\begin{eqnarray}  \label{eq:estimator}
QV_{n}(X)_{t} &=&n^{2H-1}V_{n}(X)_{t}, \ \ t \in [0,T].
\end{eqnarray}
We begin with two simple propositions which allows us to introduce drift to the process defined by \eqref{eq:process_Z}.
\begin{proposition}
\label{proposition:consistency}
Suppose that the assumptions of Theorem \ref{theorem:Consistency} prevail, and let $Y=\{Y_{t},t\in [0,T]\}$ be a stochastic process such that, as $n$ tends to infinity,
\begin{eqnarray}
n^{2H-1}V_{n}(Y)_{t} &\rightarrow& 0  \notag
\end{eqnarray}
almost surely and uniformly in $t$. Then
\begin{eqnarray}
n^{2H-1}V_{n}(Y+Z)_{t}&\underset{n\rightarrow\infty}{\overset{u.c.p}{%
\longrightarrow}}&\int_{0}^{t}|u_s|^2ds.  \notag
\end{eqnarray}
almost surely and uniformly in $t$.
\end{proposition}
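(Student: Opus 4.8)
The plan is to exploit the elementary algebraic expansion of the quadratic variation under addition, reducing the statement to Theorem \ref{theorem:Consistency}, the hypothesis on $Y$, and a single cross term controlled by the Cauchy--Schwarz inequality. Writing $\Delta_i Y = Y_{i/n}-Y_{(i-1)/n}$ and $\Delta_i Z = Z_{i/n}-Z_{(i-1)/n}$, I would first expand
$$
V_n(Y+Z)_t = \sum_{i=1}^{[nt]}\left|\Delta_i Y + \Delta_i Z\right|^2 = V_n(Y)_t + V_n(Z)_t + 2\sum_{i=1}^{[nt]}\Delta_i Y\,\Delta_i Z,
$$
so that
$$
n^{2H-1}V_n(Y+Z)_t = n^{2H-1}V_n(Y)_t + n^{2H-1}V_n(Z)_t + 2n^{2H-1}\sum_{i=1}^{[nt]}\Delta_i Y\,\Delta_i Z.
$$
By hypothesis the first term tends to $0$ almost surely and uniformly in $t$, and by Theorem \ref{theorem:Consistency} the second term tends to $\int_0^t|u_s|^2ds$ almost surely and uniformly in $t$. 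It therefore suffices to control the cross term in the same sense.

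For the cross term I would invoke Cauchy--Schwarz for each fixed $t$, obtaining
$$
\left|\sum_{i=1}^{[nt]}\Delta_i Y\,\Delta_i Z\right| \leqslant \left(\sum_{i=1}^{[nt]}|\Delta_i Y|^2\right)^{1/2}\left(\sum_{i=1}^{[nt]}|\Delta_i Z|^2\right)^{1/2} = \left(V_n(Y)_t\right)^{1/2}\left(V_n(Z)_t\right)^{1/2}.
$$
Since each map $t\mapsto V_n(\cdot)_t$ is nondecreasing (being a partial sum of nonnegative terms), the right-hand side is dominated by its value at $t=T$, which yields the uniform estimate
$$
\sup_{t\in[0,T]} n^{2H-1}\left|\sum_{i=1}^{[nt]}\Delta_i Y\,\Delta_i Z\right| \leqslant \left(n^{2H-1}V_n(Y)_T\right)^{1/2}\left(n^{2H-1}V_n(Z)_T\right)^{1/2}.
$$
Here the first factor tends to $0$ almost surely by hypothesis, while the second converges almost surely to $\left(\int_0^T|u_s|^2ds\right)^{1/2}$ by Theorem \ref{theorem:Consistency}, and is in particular bounded (note that $u$ is continuous, hence bounded, on $[0,T]$). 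Consequently the product, and thus the supremum of the cross term, converges to $0$ almost surely.

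Combining the three contributions gives the claim. Since every step reduces to monotonicity of the quadratic variation together with the Cauchy--Schwarz inequality, I do not anticipate any genuine obstacle here; the only point requiring care is passing to suprema uniformly in $t$, which is handled cleanly by the monotonicity of $V_n(\cdot)_t$ that allows one to replace the index $[nt]$ by $[nT]$ in every bound.
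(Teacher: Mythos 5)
Your proposal is correct and follows essentially the same route as the paper's own proof: expand $V_n(Y+Z)$ algebraically and control the cross term by the Cauchy--Schwarz inequality, then invoke the hypothesis on $Y$ and Theorem \ref{theorem:Consistency}. The only difference is that you spell out the uniformity in $t$ via monotonicity of $t\mapsto V_n(\cdot)_t$, a detail the paper leaves implicit.
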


\begin{proof}
By observing that 
$$
V_n(Y+Z) = V_n(Y) + V_n(Z) + 2 \sum_{i=1}^{[nt]}\left(Z_{\frac{i}{n}}-Z_{\frac{i-1}{n}}\right)\left(Y_{\frac{i}{n}}-Y_{\frac{i-1}{n}}\right),
$$
a simple application of Cauchy-Schwarz inequality yields
$$
n^{2H-1}\left[V_n(Y+Z) - v_n(Z)\right] \leq n^{2H-1}V_n(Y) + 2\sqrt{n^{2H-1}V_n(Z)}\sqrt{n^{2H-1}V_n(Y)}
$$
from which the claim follows by Theorem \ref{theorem:Consistency}.
\end{proof}

Similarly, we obtain the following result on the weak convergence.
\begin{proposition}
\label{proposition:clt}
Suppose that the assumptions of Theorem \ref{theorem:distribution asymptotic} prevail, and let $Y=\{Y_{t},t\in [0,T]\}$ be a stochastic process such that, as $n$ tends to infinity,
\begin{eqnarray}
n^{2H-1}V_{n}(Y)_{t} &\rightarrow& 0  \notag
\end{eqnarray}
and uniformly in probability. Then
\begin{eqnarray}
n^{2H-\frac{1}{2}}(Y+Z)_{t}-\sqrt{n}\int_{0}^{t}|u_s|^2ds&%
\underset{n\rightarrow\infty}{\overset{Law}{\longrightarrow}}%
&\int_{0}^{t}|u_s|^2dW_{s}  \notag
\end{eqnarray}
$\mathcal{F}_T^H$-stably in $\mathcal{D}([0,T])$, where $W=\{W_{t},t\in [0,T]\}$ is a Brownian motion independent of $\mathcal{F}_T^H$.
\end{proposition}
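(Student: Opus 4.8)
The plan is to mirror the argument for Proposition~\ref{proposition:consistency}: expand the quadratic variation of the sum $Y+Z$, isolate the term coming from $Z$ (which carries the stable limit through Theorem~\ref{theorem:distribution asymptotic}), and show that the two remaining terms vanish in probability, uniformly in $t$, so that they may be discarded by the converging-together property of stable convergence.

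First I would write, for every $n$ and every $t\in[0,T]$,
\begin{equation*}
V_{n}(Y+Z)_{t}=V_{n}(Y)_{t}+V_{n}(Z)_{t}+2\sum_{i=1}^{[nt]}\left(Z_{\frac{i}{n}}-Z_{\frac{i-1}{n}}\right)\left(Y_{\frac{i}{n}}-Y_{\frac{i-1}{n}}\right),
\end{equation*}
so that, after multiplying by $n^{2H-1/2}$ and subtracting $\sqrt{n}\int_{0}^{t}|u_{s}|^{2}ds$, the quantity in the statement becomes
\begin{equation*}
\left(n^{2H-1/2}V_{n}(Z)_{t}-\sqrt{n}\int_{0}^{t}|u_{s}|^{2}ds\right)+n^{2H-1/2}V_{n}(Y)_{t}+2n^{2H-1/2}\sum_{i=1}^{[nt]}\left(Z_{\frac{i}{n}}-Z_{\frac{i-1}{n}}\right)\left(Y_{\frac{i}{n}}-Y_{\frac{i-1}{n}}\right).
\end{equation*}
By Theorem~\ref{theorem:distribution asymptotic} the first parenthesis converges $\mathcal{F}_{T}^{H}$-stably to $\int_{0}^{t}|u_{s}|^{2}dW_{s}$ in $\mathcal{D}([0,T])$, so it suffices to prove that the last two terms tend to $0$ in probability, uniformly in $t$.

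For the cross term I would apply the Cauchy--Schwarz inequality, splitting the normalisation so as to expose the factor that is already controlled, namely
\begin{equation*}
\left|2n^{2H-1/2}\sum_{i=1}^{[nt]}\left(Z_{\frac{i}{n}}-Z_{\frac{i-1}{n}}\right)\left(Y_{\frac{i}{n}}-Y_{\frac{i-1}{n}}\right)\right|\leqslant 2\sqrt{n^{2H-1}V_{n}(Z)_{t}}\,\sqrt{n^{2H}V_{n}(Y)_{t}},
\end{equation*}
while for the pure $Y$-term I would write $n^{2H-1/2}V_{n}(Y)_{t}=\sqrt{n}\,\bigl(n^{2H-1}V_{n}(Y)_{t}\bigr)$. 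By Theorem~\ref{theorem:Consistency} the factor $n^{2H-1}V_{n}(Z)_{t}$ converges uniformly and is hence bounded in probability uniformly in $t$, so the whole remainder is governed by the rate at which the quadratic variation of the perturbation $Y$ decays, which by hypothesis vanishes at the consistency scale $n^{2H-1}$.

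Finally, I would assemble the pieces with the Slutsky-type (converging-together) property of stable convergence: since the first parenthesis converges $\mathcal{F}_{T}^{H}$-stably and the remaining two terms converge to $0$ uniformly in probability, their sum converges $\mathcal{F}_{T}^{H}$-stably to the same limit $\int_{0}^{t}|u_{s}|^{2}dW_{s}$ in $\mathcal{D}([0,T])$, which is the claim. The main obstacle is precisely the uniform control of the cross term: because the CLT normalisation $n^{2H-1/2}$ is a factor $\sqrt{n}$ larger than the consistency normalisation $n^{2H-1}$, a bare Cauchy--Schwarz bound against the bounded factor $n^{2H-1}V_{n}(Z)_{t}$ is decisive only once the decay of $V_{n}(Y)_{t}$ is known to beat this extra $\sqrt{n}$, and it is exactly in quantifying that decay for the relevant perturbation that the restriction $H<3/4$ underlying Theorem~\ref{theorem:distribution asymptotic} enters.
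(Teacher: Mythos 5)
Your route is exactly the paper's intended one (the paper's own proof is a one-line appeal to Theorem \ref{theorem:distribution asymptotic}, Slutsky's theorem, and the computations of Proposition \ref{proposition:consistency}), but the quantitative step at the end does not close, and your final paragraph essentially concedes this. After Cauchy--Schwarz your cross term is bounded by $2\sqrt{n^{2H-1}V_{n}(Z)_{t}}\,\sqrt{n^{2H}V_{n}(Y)_{t}}$, and your pure $Y$-term equals $\sqrt{n}\,\bigl(n^{2H-1}V_{n}(Y)_{t}\bigr)$; to make these vanish you need $n^{2H}V_{n}(Y)_{t}\to 0$ and $n^{2H-1/2}V_{n}(Y)_{t}\to 0$, respectively. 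The hypothesis of the proposition gives only $n^{2H-1}V_{n}(Y)_{t}\to 0$, which is weaker by a factor of $n$ (resp.\ $\sqrt{n}$), and it carries no rate whatsoever. Concretely, if $Y$ is a fixed (even deterministic, independent of everything) path whose increments on the mesh $1/n$ are of exact order $n^{-(H+1/4)}$ --- e.g.\ a typical path of a fractional Brownian motion with Hurst index $H+1/4$ --- then $n^{2H-1}V_{n}(Y)_{t}\approx t\,n^{-1/2}\to 0$, so the hypothesis holds, yet $n^{2H-1/2}V_{n}(Y)_{t}\approx t$, which shifts the limit by $t$ and falsifies the conclusion. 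Your closing claim that the missing decay is supplied by ``the restriction $H<3/4$ underlying Theorem \ref{theorem:distribution asymptotic}'' is a misattribution: that theorem, as stated, carries no restriction on $H$, and in any case no hypothesis of the proposition quantifies the decay of $V_{n}(Y)_{t}$, so nothing available to you can beat the extra $\sqrt{n}$.

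To be fair, this gap is inherited from the paper itself: its one-line proof has the same defect, and the proposition is only serviceable because in the intended application $Y_{t}=-\theta\int_{0}^{t}X_{s}ds$ is differentiable, giving the specific rate $V_{n}(Y)_{t}=O(n^{-1})$ rather than merely $o(n^{1-2H})$. The clean repair of the statement you are proving is to strengthen the hypothesis to $n^{2H}V_{n}(Y)_{t}\to 0$ uniformly in probability; then both of your bounds close, since $n^{2H-1/2}V_{n}(Y)_{t}=n^{-1/2}\,n^{2H}V_{n}(Y)_{t}\to 0$ and the cross term is $\leq 2\sqrt{n^{2H-1}V_{n}(Z)_{t}}\,\sqrt{n^{2H}V_{n}(Y)_{t}}\to 0$ by Theorem \ref{theorem:Consistency}, and the stable Slutsky argument goes through verbatim. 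Note that even this strengthened hypothesis is not satisfied by the paper's application when $1/2\leq H<3/4$ (there $n^{2H}V_{n}(Y)_{t}=O(n^{2H-1})$ blows up), so for that range the cross term must be handled by exploiting the differentiability of $Y$ directly rather than by Cauchy--Schwarz; this is a real issue you would face one theorem later, not a flaw in the repaired proposition.
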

\begin{proof}
The result follows directly from Theorem \ref{theorem:distribution asymptotic} and Slutsky's theorem together with the above computations.
\end{proof}

Consider now the estimator \eqref{eq:estimator} for the integrated volatility. With the help of Proposition \ref{proposition:consistency} and Proposition \ref{proposition:clt} we obtain the following results.
\begin{theorem}
Suppose that $\sigma_s$ is a H\"older continuous function of order $\beta > 1-H$. Then 
$$
QV_{n}(X)_{t} \longrightarrow \int_{0}^{t}|\sigma_s|^2ds
$$
almost surely and uniformly in $t$. 
\end{theorem}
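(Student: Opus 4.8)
The plan is to recognize the solution $X$ as a smooth-drift perturbation of the process $Z_t = \int_0^t \sigma_s\, dY^{(1)}_s$ and then invoke Proposition \ref{proposition:consistency}. Integrating the defining equation \eqref{eq:SDE}, I would write $X = Y + Z$ with $Z$ as above and
$$
Y_t := X_0 - \theta \int_0^t X_s\, ds,
$$
so that $Y$ absorbs the initial condition together with the drift term. Since $\sigma$ is H\"older continuous of order $\beta > 1-H$, it satisfies the hypotheses of Theorem \ref{theorem:Consistency} (taking $u = \sigma$), which gives $n^{2H-1}V_n(Z)_t \to \int_0^t |\sigma_s|^2\, ds$ almost surely and uniformly in $t$. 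The task therefore reduces to verifying the single hypothesis required by Proposition \ref{proposition:consistency}, namely that $n^{2H-1}V_n(Y)_t \to 0$ almost surely and uniformly in $t$.

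To handle $Y$, I first need the solution $X$ to be well-defined with enough regularity that its supremum $\|X\|_\infty$ is almost surely finite. I would obtain this from the variation-of-parameters representation
$$
X_t = e^{-\theta t}X_0 + \int_0^t e^{-\theta(t-s)}\sigma_s\, dY^{(1)}_s,
$$
in which the integral is a genuine Young integral: the integrand $s\mapsto e^{-\theta(t-s)}\sigma_s$ is H\"older of order $\beta$, the driver $Y^{(1)}$ is H\"older of order $H-\varepsilon$, and $\beta + (H-\varepsilon) > 1$ for $\varepsilon$ small enough. Consequently $X$ has continuous paths and is bounded on the compact interval $[0,T]$.

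The remaining estimate is then elementary. The increments of $Y$ satisfy $Y_{i/n} - Y_{(i-1)/n} = -\theta \int_{(i-1)/n}^{i/n} X_s\, ds$, whence $|Y_{i/n} - Y_{(i-1)/n}| \leq \theta \|X\|_\infty n^{-1}$ and therefore
$$
n^{2H-1}V_n(Y)_t \leq n^{2H-1}\,[nT]\,\theta^2\|X\|_\infty^2\, n^{-2} \leq T\theta^2\|X\|_\infty^2\, n^{2H-2}.
$$
Because $H < 1$ the exponent $2H-2$ is negative, so this bound---uniform in $t$ and almost surely finite---tends to $0$. Hence $n^{2H-1}V_n(Y)_t \to 0$ almost surely and uniformly in $t$, and Proposition \ref{proposition:consistency} then delivers the claim.

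The hard part will not be the quadratic-variation bound, which is immediate once boundedness of $X$ is available, but rather justifying that \eqref{eq:SDE} has a well-defined solution of the stated regularity. I would settle this either by checking directly that the Young integral appearing in the variation-of-parameters formula is well-posed, or by appealing to the standard existence and uniqueness theory for Young differential equations driven by an $(H-\varepsilon)$-H\"older continuous path.
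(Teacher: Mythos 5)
Your proposal follows essentially the same route as the paper: decompose $X$ into the drift part $Y_t = -\theta\int_0^t X_s\,ds$ (plus the harmless constant $X_0$) and the integral $Z_t=\int_0^t\sigma_s\,dY_s^{(1)}$, bound $n^{2H-1}V_n(Y)_t \leq C n^{2H-2}\to 0$ using boundedness of $X$, and conclude via Proposition \ref{proposition:consistency} and Theorem \ref{theorem:Consistency}. The only difference is that you justify boundedness of $X$ explicitly through the variation-of-parameters formula and Young integration, a point the paper dismisses as ``straightforward to check''; this is a welcome addition, not a deviation.
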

\begin{proof}
Recall that $X$ satisfies \eqref{eq:SDE}. Thus we have
$$
X_{t}= X_{0}+Y_{t}+\int_{0}^{t}\sigma_{s}dY_{s}^{(1)},
$$
where $Y_{t}=-\theta\int_{0}^{t}X_{s}ds.$ It is straightforward to check that the solution $X$ is bounded on every compact interval. Consequently, the process $Y_t$ is differentiable with bounded derivative, and thus 
$$
V_n(Y) \leq \theta \parallel X\parallel_\infty^2 n^{-1}.
$$
Now the result follows from Proposition \ref{proposition:consistency} and Theorem \ref{theorem:Consistency}.
\end{proof}
\begin{theorem}
Suppose that $\sigma = \{\sigma_{t},t\in[0,T]\}$ is H\"older continuous of order $\beta > \max\left(1-H,\frac12\right)$, and measurable with respect to $\mathcal{F}_T^{B^H}$. Suppose further that $0< H < \frac34$. Then 
\begin{eqnarray}
\sqrt{n}\left(QV_{n}(X)_{t}-\int_{0}^{t}|\sigma_s|^2ds\right)&\underset{%
n\rightarrow\infty}{\overset{Law}{\longrightarrow}}&\int_{0}^{t}|%
\sigma_s|^2dW_{s},  \notag
\end{eqnarray}
$\mathcal{F}_T^H$-stably in the space $\mathcal{D}([0,T]^2)$, where $W=\{W_{t},t\in[0,T]\}$ is a Brownian motion
independent of $\mathcal{F}_{T}^H$.
\end{theorem}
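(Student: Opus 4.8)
The plan is to reduce the statement to Proposition \ref{proposition:clt}, in the same way that the preceding consistency theorem was reduced to Proposition \ref{proposition:consistency}. First I would rewrite the normalisation: since $QV_n(X)_t = n^{2H-1}V_n(X)_t$ by \eqref{eq:estimator}, the quantity on the left-hand side equals
$$
\sqrt{n}\left(QV_n(X)_t-\int_0^t|\sigma_s|^2ds\right)=n^{2H-1/2}V_n(X)_t-\sqrt{n}\int_0^t|\sigma_s|^2ds,
$$
which is exactly the expression treated in Proposition \ref{proposition:clt} once we take $u=\sigma$. Integrating the defining equation \eqref{eq:SDE} gives the decomposition $X_t=X_0+Y_t+Z_t$, with $Y_t=-\theta\int_0^t X_s\,ds$ and $Z_t=\int_0^t\sigma_s\,dY_s^{(1)}$. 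Since the constant $X_0$ cancels in every increment $X_{i/n}-X_{(i-1)/n}$, we have $V_n(X)_t=V_n(Y+Z)_t$, so it suffices to apply Proposition \ref{proposition:clt} to the pair $(Y,Z)$.

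The single hypothesis of Proposition \ref{proposition:clt} still to be verified is that the drift is asymptotically negligible, namely $n^{2H-1}V_n(Y)_t\to 0$ uniformly in probability. Here I would invoke, as in the previous theorem, that the solution $X$ of \eqref{eq:SDE} is bounded on $[0,T]$ almost surely. Consequently $Y$ is continuously differentiable with $|Y_t'|=\theta|X_t|\leq\theta\|X\|_\infty$, each increment obeys $|Y_{i/n}-Y_{(i-1)/n}|\leq\theta\|X\|_\infty n^{-1}$, and therefore
$$
V_n(Y)_t\leq\theta^2\|X\|_\infty^2\,T\,n^{-1}.
$$
Multiplying by $n^{2H-1}$ yields $n^{2H-1}V_n(Y)_t\leq\theta^2\|X\|_\infty^2\,T\,n^{2H-2}$, which tends to zero because $H<1$; the bound is uniform in $t$ and almost sure, hence in particular it holds uniformly in probability.

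It remains to confirm that the assumptions on $\sigma$ and $H$ match those of Theorem \ref{theorem:distribution asymptotic}, and this is where the standing hypotheses are consumed. By assumption $\sigma$ is H\"older continuous of order $\beta>\max\left(1-H,\frac12\right)$ and $\mathcal{F}_T^H$-measurable, which are precisely the conditions imposed on $u$ there, while the restriction $H<\frac34$ is exactly what secures the summability $\sum_{i\geq0}\rho_{Y^{(1)}}^2(i)<\infty$ used to establish the tightness condition (H2) in that proof. With these verified, Proposition \ref{proposition:clt} with $u=\sigma$ delivers the $\mathcal{F}_T^H$-stable convergence of $n^{2H-1/2}V_n(Y+Z)_t-\sqrt{n}\int_0^t|\sigma_s|^2ds$ to $\int_0^t|\sigma_s|^2dW_s$, which by the identity $V_n(X)_t=V_n(Y+Z)_t$ is the assertion of the theorem. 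I expect no real obstacle here: the only mildly technical point is the negligibility of the drift, and that rests solely on the pathwise boundedness of $X$ together with $2H-2<0$. All of the substantive probabilistic content, the stable limit theorem itself, has already been isolated in Theorem \ref{theorem:distribution asymptotic}.
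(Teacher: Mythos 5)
Your reduction to Proposition \ref{proposition:clt} and the decomposition $X=X_0+Y+Z$ with $Y_t=-\theta\int_0^t X_s\,ds$ are exactly the paper's, but your verification of the drift's negligibility is carried out at the wrong scale, and this is a genuine gap. For the fluctuation result, Slutsky's theorem requires $n^{2H-1/2}\left(V_n(X)_t-V_n(Z)_t\right)\to 0$ in probability: the drift contribution must vanish after multiplication by $n^{2H-1/2}$, which is $\sqrt{n}$ times larger than the normalization $n^{2H-1}$ relevant for the law of large numbers. You only check $n^{2H-1}V_n(Y)_t\leq Cn^{2H-2}\to 0$, which holds for every $H<1$ but is strictly weaker than what is needed. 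With your own bound $V_n(Y)_t\leq\theta^2\|X\|_\infty^2\,T\,n^{-1}$, the relevant quantity is $n^{2H-1/2}V_n(Y)_t\leq Cn^{2H-3/2}$, which tends to zero precisely when $H<\frac34$; this is what the paper's proof checks, and it is the \emph{only} point in the paper where the hypothesis $H<\frac34$ is consumed. You were misled by the literal wording of Proposition \ref{proposition:clt}, whose hypothesis is indeed printed with the factor $n^{2H-1}$; but that statement cannot be sufficient as written (take $u\equiv 0$, so $Z\equiv 0$, and let $Y$ be a deterministic function with $V_n(Y)\asymp n^{3/4-2H}$: the stated hypothesis holds, yet the conclusion would force $n^{2H-1/2}V_n(Y)\to 0$, which fails), and the paper's own application of the proposition shows that the intended condition is at the scale $n^{2H-1/2}$.

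The visible symptom of this gap is your closing claim that $H<\frac34$ is consumed by the summability $\sum_{i\geq 0}\rho_{Y^{(1)}}^2(i)<\infty$ in hypothesis (H2). That is not where the restriction lives in this model: Theorem \ref{theorem:distribution asymptotic} is stated with no restriction on $H$, and, as the Discussion section stresses, the stable CLT for $Z$ holds for the whole range $H\in(0,1)$ because $Y^{(1)}$ has short memory --- the kernel $r_H$ governing the covariance of its increments decays exponentially in the time lag, so $\sum_i\rho_{Y^{(1)}}^2(i)<\infty$ for every $H$, in contrast with the fractional Brownian motion, where $H\leq\frac34$ would indeed be required. Consequently your argument, as written, never genuinely uses $H<\frac34$; but for $H\geq\frac34$ the drift correction $n^{2H-3/2}$ does not vanish, so the hypothesis must be invoked exactly at the drift-negligibility step that you treated at the weaker scale. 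The fix is small --- replace your estimate by $n^{2H-1/2}V_n(Y)_t\leq Cn^{2H-3/2}\to 0$ for $H<\frac34$, and control the cross term at the same scale --- and then the proof coincides with the paper's.
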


\begin{proof}
Observing that since $0< H < \frac34$, we have, for $Y_{t}=
-\theta\int_{0}^{t}X_{s}ds,$ that
$$
n^{2H-\frac12}V_n(Y) \leq \theta \parallel X\parallel_\infty^2 n^{2H-\frac32} \rightarrow 0.
$$
Thus the result follows directly from Proposition \ref{proposition:clt} and Theorem \ref{theorem:distribution asymptotic}.
\end{proof}

\section{Discussions}
In this article we have studied realized quadratic variation of the process $
\int_{0}^{t}u_{s}dY^{(1)}_{s}$. As our main result, we established almost sure convergence, uniform in time, of the normalised realized quadratic variation. By following our proof, one can also provide stronger mode of convergence for the realized quadratic variation of the process $
\int_{0}^{t}u_{s}dB^H_{s}$, studied in \cite{CNW}. In addition, we have provided stable convergence in law for the whole region $H\in(0,1)$ of the Hurst parameter. By taking account the fact that $Y^{(1)}$ has a short memory, it is not surprising that one can cover also values $H\in\left(\frac34,1\right)$. However, in order to estimate the integrated volatility we posed an additional condition $H<\frac34$. At this point it is not clear whether this condition can be removed without going into higher order differences as in \cite{CNW}, or can our proofs be refined to obtain stable convergence for the estimator of the integrated volatility.

\end{document}